\newcommand{\E}{\mathbf{E}}
\newcommand{\R}{\mathbf{R}}
\def\arrvline{\hfil\kern\arraycolsep\vline\kern-\arraycolsep\hfilneg}
\newtheorem{thm}{Theorem}[section]
\newtheorem{prop}[thm]{Proposition}
\newtheorem{lem}[thm]{Lemma}
\newtheorem{cor}[thm]{Corollary}
\newtheorem{ex}[thm]{Example}
\newtheorem{defn}[thm]{Definition}
\title{K-Nearest Neighbor Approximation
Via the Friend-of-a-Friend Principle}
\author{Jacob D.\ Baron \hspace{.8cm} R.W.R.\ Darling}
\address{National Security Agency, Fort George G.\ Meade, MD 20755-6844, USA}
\date{\today}
\begin{document}

\maketitle


\begin{abstract}
Suppose $V$ is an $n$-element set where for each
$x \in V$, the elements of $V \setminus \{x\}$ are ranked by
their similarity to $x$.
The $K$-nearest neighbor graph 
is a directed
graph including an arc from each $x$ to the $K$
points of $V \setminus \{x\}$ most similar to $x$. Constructive
approximation to this graph using far fewer than $n^2$ comparisons
is important for the analysis of large high-dimensional data sets. 
\emph{$K$-Nearest Neighbor Descent} is a parameter-free heuristic where a sequence of
graph approximations is constructed, in which second
neighbors in one approximation are proposed as neighbors in the next. Run times in a test case fit an $O(n K^2 \log{n})$ pattern. This bound is rigorously justified for a similar algorithm,
using range queries, when applied to a homogeneous 
Poisson process in suitable dimension. However 
the basic algorithm fails to achieve
subquadratic complexity on sets whose similarity rankings
arise from a ``generic'' linear order on the $\binom{n}{2}$ inter-point distances in a metric space.
\end{abstract}

{\small
\noindent \textbf{Keywords:}
similarity search, nearest neighbor, ranking system, linear order,
ordinal data, random graph, proximity graph,
expander graph\\
\textbf{MSC class: } Primary: 90C35; Secondary: 06A07
}

\vspace{.4in}
\begin{center}
    \emph{I get by with a little help from my friends.}
\end{center}
\vspace{.15in}
\hspace{2.5in} --- John Lennon and Paul McCartney, 1967
\vspace{.2in}


\section{$K$-Nearest Neighbor Approximation}
\subsection{Motivation}
Approximation to the $K$-nearest neighbor graph
for a data set of size $n$ is needed before machine learning algorithms can be applied. Typically $K$ is small (say 5 to 50) but $n$ may be in millions or billions, rendering exhaustive search impractical.
Muja and Lowe \cite{muj} write: ``the
most computationally expensive part of many computer vision and machine learning algorithms consists of finding nearest neighbor
matches to high dimensional vectors that represent the training data.''
Examples include the popular DBSCAN clustering algorithm \cite{est},
nearest neighbor classifiers described in
Devroye, Gy{\"o}rfi \& Lugosi \cite{dev}, and UMAP \cite{mci}.

\subsection{Metric space versus ranking approaches}
\label{subsec1.2}
Metric spaces provide a natural context for similarity rankings. In a finite subset $X$ of a metric space $(\mathcal{X}, \rho)$ where for each $x \in X$ the $|X|-1$ distances from $x$ to the other points are distinct\footnote{
This is a weaker than requiring all $\binom{|X|}{2}$ inter-point
distances to be distinct.},
$\rho$ induces a similarity ranking for each point in an obvious way: $x$ is more similar to $y$ than to $z$ iff $\rho(x,y)<\rho(x,z)$. However the $K$-nearest neighbor concept does not require a metric space.
It suffices to have an oracle which determines,
for distinct points $x, y, z$, whether $y$ is more similar to $x$ than $z$
is\footnote{For ordinal data analysis in general, see
Kleindessner \& von Luxburg \cite{kle}.
}. In other words, for each $x$, the oracle knows a ranking of the other points by their similarity to $x$. When two different metrics may be applied to the same set of points,
and their induced similarity rankings coincide, any method that proposes two
different approximate $K$-nearest neighbor graphs 
violates common sense, rather like a computation in
 differential geometry which gives different results in
 different co-ordinate systems.

Given a finite subset $S$ of $\R^d$ with the $L_p$ metric, for 
$1 \leq p \leq \infty$, the balanced box-decomposition tree
of Arya et al.\ \cite{ary}, a variation of the classical \emph{k-d tree} reviewed in
\cite{dev},
constructs an approximate 
$K$-nearest neighbor graph in $O(d K n \log{n})$ steps,
with a data structure of $O(d n)$ size.
Muja and Lowe \cite{muj} propose other tree-based algorithms.
By using random projections, Indyk \& Motwani's \cite{ind}
\emph{locality sensitive hashing} has
$O(K n^{1 + \rho} \log{n})$ query time and
requires $O(d n + n^{1 + \rho})$ space, for a problem-dependent
constant $\rho > 0$ described in Datar et al.\ \cite{dat}, and
requires tuning to the specific problem.

Houle and Nett \cite{hou} describe a
rank-based similarity search algorithm called the \textit{rank cover tree},
which they claim outperforms in practice both the
balanced box-decomposition tree and locality sensitive hashing.
Haghiri et al \cite{hag} propose another
comparison-tree-based nearest neighbor search.
Tschopp et al \cite{tsc} present a
randomized rank-based algorithm using the 
``combinatorial disorder'' parameter of Goyal et al \cite{goy}.

\subsection{$K$-nearest neighbor descent}

In \cite{don},
Dong, Charikar, and Li proposed and implemented 
a heuristic 
called \textbf{$K$-nearest neighbor descent} (NND)
for approximation to the $K$-nearest neighbor graph.
In the metaphor of social networks, their principle is:
\begin{align*}
    \textit{A friend of a friend could likely become a friend.}\footnotemark 
    \tag{FOF}
\end{align*}
\footnotetext{Here ``likely'' should be read as ``more likely than the average person,'' not that the probability is necessarily greater than 50\%.}
We call this the \textbf{\emph{friend-of-a-friend principle} (FOF)}. NND has an appealing
simplicity and generality:
\begin{itemize}[leftmargin=.5in]
    \item 
    No parameter choices, except $K$.
    \item
    No precomputed data structures, such as trees or hash tables.
    \item
    Simple, concise code; only the similarity oracle is 
    application-specific.
        \item 
    No vector embedding is required.
    \item
    Randomly initialized -- repeated NND runs give
    alternative approximations.
\end{itemize}
Perhaps for these reasons, NND was the method of choice
in the general-purpose UMAP dimensionality reduction
algorithm \cite{bec, mci} of McInnes, Healy, et al.\

The main drawback of NND has been its lack of any theoretical justification.
The present work seeks to shed light on this issue.
Various software efficiency  optimizations render the algorithm as 
presented in \cite{don} opaque to rigorous analysis. 
Instead we analyze non-optimized versions and variations which
preserve the essential concept, which is FOF. In \cite{darnnd},
we report on observed complexity of a Java Parallel Streams
implementation.

\subsection{Outline of the paper}
We begin by describing NND and several related $K$-nearest neighbor approximation algorithms that take advantage of FOF (Section \ref{s:algs}). Next we give examples that show how these algorithms behave in different contexts, succeeding in some and failing in others (Section \ref{s:examples}).

We develop the ranking framework in detail, and 
use it to demonstrate the failure of FOF-based algorithms
to achieve sub-quadratic complexity in metric spaces which arise from generic \textit{concordant ranking systems}
(Section \ref{s:ranking}).

Finally we use random graph methods to prove that 
one FOF-based algorithm succeeds for a homogeneous Poisson 
process on a compact metric space with only $O(n \log{n})$ work---better than the $O(n^{1.14})$ claimed in \cite{don} (Section \ref{s:knnpp}).

\section{$K$-Nearest Neighbor Approximation Algorithms Exploiting the Friend-of-a-Friend Principle}
\label{s:algs}

\subsection{Friends and cofriends}
Let us first formalize two notions we have used already. For a positive integer $m$, $[m]$ refers to the set $\{1, 2, , \ldots, m\}$.

\begin{defn} \label{d:ranksystem}
A \emph{ranking system} $\mathcal{S}=(S,(r_x)_{x\in S})$ is a finite set $S$ together with, for each $x \in S$, a ranking $r_x: S \setminus \{x\} \rightarrow [|S|-1]$ of the other points of $S$. We say that $x$ \emph{prefers} $y$ to $z$
iff $r_x(y) < r_x(z)$ (for distinct $x,y,z \in S$). We typically abbreviate $(r_x)_{x\in S}$ by $\mathbf{r}$.
\end{defn}

\begin{defn} \label{d:knngraph}
For a ranking system $\mathcal{S}=(S,\mathbf{r})$ and a positive integer $K < |S|$, the \emph{$K$-nearest neighbor graph} for $\mathcal{S}$ is the directed graph on $S$ that contains, for each distinct $x,y\in S$, an arc from $x$ to $y$ iff $r_x(y) \leq K$. 
\end{defn}

All the $K$-nearest neighbor approximation algorithms we will describe follow a common framework. They take as input a set $S$ and an integer $K$. In the background is a vector $\mathbf{r}$ of rankings that makes $S$ into a ranking system $\mathcal{S}=(S,\mathbf{r})$. We do not know $\mathbf{r}$ (if we did, we would be done), but we are allowed to query it, in the following sense. For any distinct $x,y,z \in S$, we may ask whether $r_x(y) < r_x(z)$ or vice-versa\footnote{See \cite{darnnd} for an extension of these ideas to rankings with ties.}. We are never told the value of any $r_x(y)$.  

The intermediate state of the algorithm is a directed graph on $S$ which represents our current best approximation to the true $K$-nearest neighbor graph for $\mathcal{S}$. Our initial approximation is a uniformly random $K$-out-regular digraph on $S$; that is, each point independently chooses 
a uniform random set of 
$K$ initial out-neighbors.
In successive iterations called \emph{rounds}, we allow points to ``share information'' in some way that, we hope, leverages FOF.


At each round $t$, each point $x$ will be the source of some arcs (usually $K$ of them, except in Section \ref{s:2nrq-describe}); we write $F_t(x) \subseteq S \setminus \{x\}$ for the set of targets of these arcs, which we call the \textbf{\emph{friends}} of $x$. The points that view $x$ as a friend are called the \textbf{\emph{cofriends}} of $x$, denoted 
\[C_t(x):= \{ u \in S \setminus \{x\}: x \in F_t(u) \}
\]
(possibly $C_t(x) = \varnothing$). To leverage FOF, we perform some operation intended to ``make introductions'' between certain pairs of points which FOF suggests might ``get along'' (i.e.\ rank each other highly). As a result of these ``introductions,'' one (or possibly all, or many) point(s) $x$ updates her friend set if she has just ``met'' some new point $y$ whom she prefers to some current friend $z$. After these updates, we move to round $t+1$ and repeat. When some condition is met, we stop and return our approximate $K$-nearest neighbor graph.

In a \textbf{friend list request}, a point $x$ asks some other point $y$ to provide $x$ with $F_t(y)$, $y$'s current list of friends. Typically $y \in F_t(x)$, which leads $x$ to think $y$ might have friends that $x$ would like to meet. Then, via queries to $r_x$, $x$ determines her $K$ most-preferred elements of $F_t(x) \cup F_t(y) \setminus \{x\}$. Finally, $x$ updates her friend set to consist of these $K$ points.

A \textbf{friend barter} is a pair of reciprocal friend list requests, shown in Figure \ref{f:friendpool}. A point $x$ makes a friend list request of another point $y$, and vice-versa, and each updates her respective friend set as appropriate. Typically $y \in F_t(x)$, but this does not mean $x \in F_t(y)$. Thus $x$ is introduced to friends of a friend, while $y$ is introduced to friends of a cofriend.

\begin{figure}
\caption{ \textbf{Friend barter: }\textit{
Out of twenty points in the left pane, we show two, each with six friends, indicated by dashed and solid arrows, respectively. Point $y$ is a friend of point $x$. The right pane shows the new friend sets of $x$ and $y$, respectively, after the friend barter. Point $y$ gained three new friends (and discarded three), while point $x$ upper gained two (and discarded two).}
} \label{f:friendpool}
\begin{center}
\scalebox{0.42}{\includegraphics{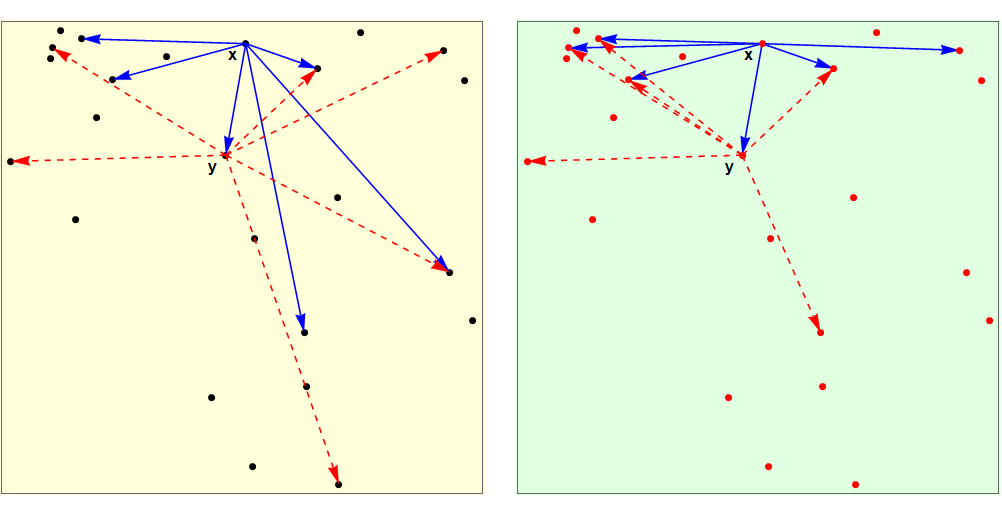}} 
\end{center}
\end{figure}

Within this framework of iterated ``introductions'' and updates, specifying an algorithm amounts to deciding which introductions occur in a given round and fixing a stopping condition. Shortly we will specify several algorithms in this way, but before doing so, it is worth reflecting on the initialization step. There is good reason to initialize randomly, quite apart from convenience.

\subsection{Consequences of random initialization} \label{s:randominit}
Let $D$ be any $K$-out-regular directed graph on $S$, and let $G$ be the corresponding undirected graph, with mean vertex degree about $2 K$, obtained by ignoring the direction of arcs in $D$. Consider initializing a FOF-based algorithm with $D$. In any FOF-based algorithm, information travels, via introductions to friends of friends and/or friends of cofriends, by at most 1 step in $G$ per round. Thus if the shortest path in $G$ from $y$ to $x$ has length $d$, any FOF-based algorithm will need to run at least $d-1$ rounds before knowledge of $y$ reaches $x$.

The number of rounds needed for every point to have the possibility
of learning about its true nearest neighbors is thus related to the
\emph{diameter} of $G$, i.e.\ the maximum over
$x, y \in S$ of the length of the shortest path from $x$ to $y$. We would like this diameter to be small.

On the other hand, since we initially have no knowledge of $\mathbf{r}$,
we should not bias our algorithm by arranging to include, say, an $(n-1)$-leaf star in $G$. Each round involves sorting operations
at each point, and to keep these sorts to $O(K^2 \log K)$ time,
the degrees in $G$ need to be kept at $O(K)$.


Taking $D$ uniformly random strikes a good balance. Both the diameter of, and degrees in $G$ are then small. Each point has only a Binomial$(n-1, \frac{K}{n-1})$ number of cofriends. 
In Lemma \ref{p:diameter} of the Appendix, we will
prove that, for $K \geq 3$, with probability tending to 1 as $n \to \infty$, $G$ is an \emph{expander graph} in the sense of \cite{kri},
implying that its diameter is $O(\log n)$. 

\section{Implementations of $K$-nearest Neighbor Descent} \label{s:implement}

\subsection{Functional implementation for ranking systems} \label{s:javannd}
Darling has published an efficient Java functional implementation of NND, for application to the \texttt{prank2xy} algorithm \cite{darp}. 
The ranking system notion $r_x(y) < r_x(z)$ is
expressed abstractly in terms of a Java \texttt{Comparator}.
Batch updates to the $K$-NN approximation are made
in parallel using a \texttt{Collectors} method of a Java \texttt{ParallelStream}.

In experiments (see \cite[Appendix]{darnnd}) using  Kullback-Leibler divergence
between points on a
$d$-dimensional simplex in $\R^{d}$,
our statistical convergence criterion for NND was always satisfied
within $\lceil 2 \log_K{n} \rceil$ batch updates.
In these tests, 
$2 \times 10^4 \leq n \leq 2 \times 10^6$, $K \in \{16, 32, 64\}$, and $d \in \{10, 20, 40, 60\}$. Satisfactory approximation was achieved for $K > d$.

We conjecture that $\lceil 2 \log_K{n} \rceil$ is the correct bound
for the number of NND rounds in the ill-defined set of 
cases where the algorithm works.

\subsection{Original implementation for metrics} \label{s:batchwise}
A sophisticated algorithm for $K$-nearest neighbor descent
on points with a symmetric distance function was designed
and implemented in \cite{don}. We shall not describe it in detail, except
to say that friend sets are not immutable during a round, and the
stopping criterion is different to ours. The authors
describe in detail the application to five well-studied data sets 
of sizes between 28,755 and 857,820 using
various metrics for similarity. 

\subsection{Python implementation}
McInnes has published the \texttt{pynndescent} Python library.
It assumes that similarity is obtained from a metric, of which 22
examples are available in the code. This is one of 19 single-threaded
approximate $K$-NN Python algorithms among benchmarks at \cite{ber}.

\subsection{Scheduled pointwise NND} \label{sec:pointwise}
Apply an arbitrary ordering to the $n$ elements of $X$, called the schedule.
Initial friend sets $\{F_0(x)\}_{x \in X}$ are uniformly random as above.
Pass $t$ consists of a set of $n$ friend barters: visit each element of $x \in X$ in the schedule, 
and update the friends of $x$ by friend barters with all friends of $x$.
That means $x$ replaces its former friend set $F_{t-1}(x)$
by a new friend set $F_{t}(x)$, consisting of the $K$
 top ranked elements of the set
 \[
 F_{t-1}(x) \cup \left(
 \bigcup_{y \in F_{t-1}(x)} F_{t-1}(y)
 \right) \setminus \{x\}.
 \]
Perform multiple passes until no further changes occur. Note
that one pass is not the same as one round of the functional
implementation of Section \ref{s:javannd}, in which all friend sets
are updated simultaneously by a \texttt{collect(.)} method. This
Scheduled pointwise NND was not implemented by us, but is
used elsewhere is this paper for theoretical analysis.

\section{Examples}\label{s:examples}

This section considers how FOF-based algorithms fare against 
heterogeneous examples of ranking systems, summarized in Table \ref{t:contexts}.
We mostly restrict attention 
to ranking systems induced by distance in a metric space $(\mathcal{X},\rho)$, as in Section \ref{subsec1.2}. In each such example, the point set of the ranking system will be called $X \subseteq \mathcal{X}$, and $|X|=: n$. For any triples $x,y,z \in X$ with $y \neq z$ but $\rho(x,y)=\rho(x,z)$, we can choose arbitrarily whether $r_x(y) < r_x(z)$ or vice-versa.

\begin{table}
\caption{\textit{
Some Contexts for $K$-NND}
}
\centering
 \begin{tabular}{| l| c | c |} \label{t:contexts}
Point Generation & Metric & FOF Applies?  \\ [0.5ex]
 \hline
Leaves of Star Graph & Paris Metric & Yes \\ \hline
Random Points on Circle & Path Metric & Yes \\ \hline
Powers of 2 & $\ell_1(\R)$ & Unknown \\ \hline
Random $m$-Strings & Longest Common Subsequence & No ($m$ large) \\ \hline
\end{tabular}
\end{table}

\subsection{Paris metric} \label{s:starleaves}

Let $\mathcal{X}$ be $\ell_1^n$, meaning the $n$-dimensional
real Banach space where
\[
\|(a_1, \ldots, a_n)\| = \sum_1^n |a_i|.
\]
Let $e_i$ denote the $i$-th basis vector. Take
$X$ to consist of $n$ points of the form
\[
\{x_i:=\eta_i e_i\}_{1 \leq i \leq n},
\]
where $0 < \eta_1 < \cdots < \eta_n$ are real numbers.
Then
\[
\rho(x_i,x_j)=\|x_i - x_j\|= \eta_i + \eta_j, \quad i \neq j.
\]
Project $X$ and an auxiliary point $x_o$ into the plane, such that 
$x_o$ maps to the origin, no three points are co-linear,
and $x_i \in X$ is distance $\eta_i$ from $x_o$, as in Figure 
\ref{f:stargraph}; then $\rho$ is called the \emph{Paris metric}.
Using the metropolis as a metaphor, the shortest path
between $x_i$ and $x_j$ is the sum of their distances
from the origin. In the graph theory context, view $x_i$ as a leaf node
of an edge-weighted star centered at $x_0 \notin X$,
where edge $x_0 x_i$ carries weight $\eta_i$.
The Paris metric is the path metric\footnote{
Given a pair of vertices of an edge-weighted graph, the
 path metric is the minimum, over paths between those vertices,
 of the sum of the weights along a path.
} on this star.

For each $j > K$, the $K$-nearest neighbor set of $x_j$
is the set
\[
\{x_1, x_2, \ldots, x_K\}.
\]
For $j \leq K$, it is
\[
\{x_1, x_2, \ldots, x_{K+1}\} \setminus \{x_j\}.
\]

\begin{figure}
\caption{\textbf{Paris metric: }
Let $X$ be the leaf nodes of the star graph shown
embedded in the plane. The distance between two leaves
is the sum of their distances from the central
node. Here $K = 4$, and 8 of the vertices (dashed arcs)
have the same set of $K$ nearest neighbors, which
are the $K$ nodes (solid arcs) closest to the center.
} \label{f:stargraph}
\begin{center}
\scalebox{0.4}{\includegraphics{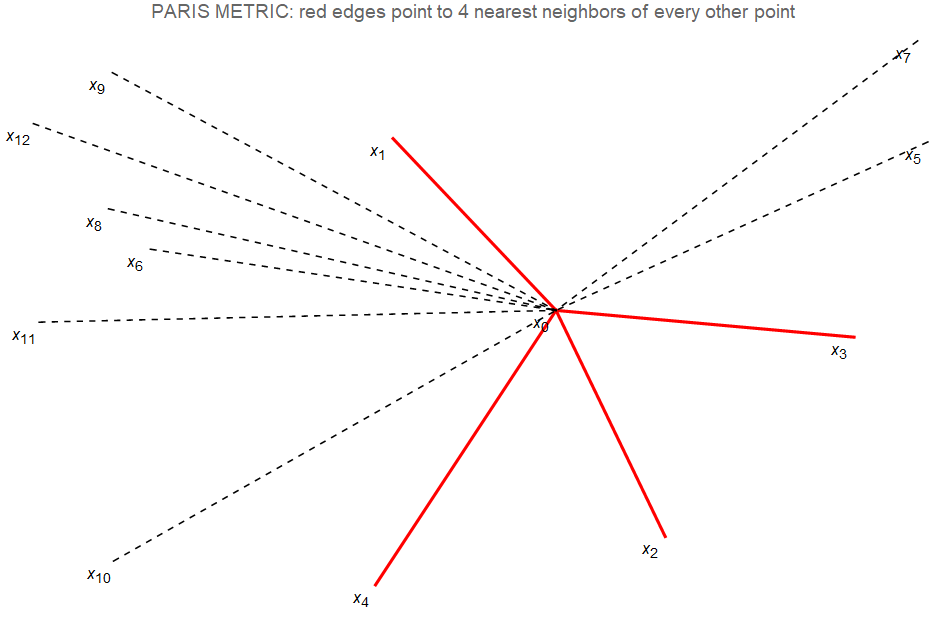}}
\end{center}
\end{figure}

NND works well in this example because every pair of vertices are in perfect agreement about how they rank the other $n-2$ vertices. Thus $x_i$'s information is always useful---or at least trustworthy---to $x_j$, and vice-versa. Each tells the other about the vertices she knows that are closest to $x_0$. 

Suppose our goal is to discover the $K$-nearest neighbor graph exactly,
within some specified number of rounds. 
Proposition \ref{p:diameter}, which bounds the diameter of the initial graph, 
implies that if $K \geq 3$, 
\[
r^*:=\left\lceil \log_{K-1} (n) \right\rceil
\]
rounds of batchwise NND suffice for large $n$, with high probability\footnote{
Given $\epsilon > 0$, there exists $n_0$ such that the event occurs with probability greater than $1 - \epsilon$ when $n > n_o$.}.

Suppose $i \leq K$, and consider all paths
in the initial undirected graph
of length $r^*$, starting at $x_i$ . By Proposition \ref{p:diameter}, every point is on one of these paths with high probability. 
As the algorithm progresses, every other point along each of these paths will retain $x_i$ in its friend set. Hence every other point will have
$x_i$ in its friend set after $r^*$ rounds.

\subsection{Random points on a circle}
Suppose $\mathcal{X}$ is the circle $S^1$, and the distance between two points is the angle subtended in radians (i.e. path distance).
Let $X$ consist of $n$ points sampled
independently according to Lebesgue measure on
the circle, or possibly a Poisson$(n)$ number of points. 
Spatial intuition suggests there 
may be a constant $\gamma \in (0, 1)$, depending on $K$,
such that each of the first $T=O(\log{(n/K)})$ rounds of friend set updates
multiplies the expected
angular span of the friend set of $x$ by $\gamma$ or less.
If this is true, then after $T$ rounds of
friend set updates, elements of
the friend set $F_T(x)$ typically lie within a span
of $2\pi\gamma^{T}$ radians. The $K$ nearest neighbors
of a point typically lie within a span of $2 \pi K / n$
radians. If NND works at all in this context, we would
expect
\[
T:=\lceil (\log{(n/K)})/\log{(1/\gamma)} \rceil
\]
rounds to expose these nearest neighbors, after which further friend updates
would have no effect. We have been unable
to make this argument rigorous for NND, but
Section \ref{s:knnpp} provides a  rigorous analysis
along these lines for a range-based, rather than a ranking-based,
algorithm. The value of $\gamma$ derived in (\ref{e:scaleparam}) for that algorithm is
\(
  \gamma:=1 - \sqrt{1 - \frac{2} {K}},
\)
provided $K \geq 3$.

\subsection{Random ranking system}
\label{s:uniformrandomRS}
In this example there is no metric. Given a set $X$ of size $n$, simply choose a ranking system on $X$ uniformly at random. Do this by choosing for each $x \in X$ a uniformly random ranking $r_x$ of $X \setminus \{x\}$, with all choices independent. 

There is no hope for decent performance from any FOF-based algorithm in this context, because the FOF principle no longer applies. 
Since $r_x, \, r_y$ are independent for distinct $x,y$, a point that $y$ ranks highly is no more or less likely to be ranked highly by $x$ than is any other point, regardless of how $x$ and $y$ rank each other.

Consider applying the scheduled pointwise NND of 
Section \ref{sec:pointwise}, using only friend list requests. (The analysis would be similar for any other NND variant, if a little less clean.) Consider the progress of the algorithm from the point of view of some fixed $x$. How long until $x$'s friend set contains, say, $\lceil K/2 \rceil$ of her true best friends $B:=\{y \in X \setminus \{x\} \;:\; r_x(y) \leq K\}$? There is an update at $x$ only every $n$th round, and in these rounds $x$ meets at most $K^2$ points, some not for the first time. Each new point $z$ is equally likely to be ranked in any position by $r_x$. Thus we expect $x$ to have to meet about $n/2$ points, over the course of at least $n/(2K^2)$ relevant rounds, before she finds $\lceil K/2 \rceil$ points of $B$. Running through $n/(2K^2)$ relevant rounds means running $n^2/(2K^2)$ rounds total, taking $O(n^2\log K)$ work. Essentially, $x$ is performing an inefficient version of exhaustive search during her rounds. Since $x$ was arbitrary, all other points have the same experience. 

It is fair to characterize this performance as worse than exhaustive search.

\subsection{Longest common substring}
\label{s:lcss}
Take a finite or countable alphabet $\mathcal{A}$, and a large positive integer $m$.
Consider the metric space $(\mathcal{X},\rho)$ consisting of strings
$\mathbf{x}:=x_1 x_2 \cdots x_{m}$ of length $m$, with characters $x_i \in \mathcal{A}$,
under the metric based on the \emph{longest common substring}.
To be precise, the length $M(\mathbf{x, x'})$ of the longest common substring
between two strings is given by
\[
M(\mathbf{x, x'}):=\max \{s: \exists \,i, j \mbox{ such that }
x_{i+1} x_{i+2} \cdots x_{i+s} = x'_{j+1} x'_{j+2} \cdots x'_{j+s}\},
\]
and the metric itself is defined by
\[
\rho(\mathbf{x, x'}):= 1 - \frac{M(\mathbf{x, x'})}{m}.
\]
Symmetry $\rho(\mathbf{x, x'}) = \rho(\mathbf{x', x})$ is immediate,
as is 
$\rho(\mathbf{x, x}) = 0$, while the triangle inequality is verified 
in a note of Bakkelund \cite{bak}.

Let $\mu$ be a non-trivial probability measure on $\mathcal{A}$.
Let $X \subset \mathcal{X}$ 
consist of $n$ strings of the form
$\mathbf{x}:=X_1 X_2 \cdots X_{m}$, where each $X_i$ is sampled independently from
$\mathcal{A}$ according to the distribution $\mu$, with distinct $\mathbf{x,y}\in X$ independent. 

Given three random strings, ties such as
$M(\mathbf{x,y}) = M(\mathbf{x,z}) = q$ will be fairly common.
We break ties by ranking
\( r_\mathbf{x}(\mathbf{y}) < r_\mathbf{x}(\mathbf{z})
\)
if the product of probabilities of the characters in the
longest substring that $\mathbf{y}$ shares with $\mathbf{x}$
is more than the corresponding product for $\mathbf{z}$ and $\mathbf{x}$. If there are still ties, break these arbitrarily.

Given an integer $K$ much smaller than $n$,
there is an integer $q_K$ (also depending on $m$ and $\mu$) such
\[
\Pr[M(\mathbf{x,y}) \geq q_K + 1] < \frac{K}{n} 
\leq \Pr[M(\mathbf{x,y}) \geq q_K].
\]
The importance of $q_K$ is that the $K$ nearest neighbors of $\mathbf{x}$
in $X$ lie within the intersection with $X$ of the ball
of radius $1 - q_K/m$, centered at $x$, which may be written
\[
R(\mathbf{x}):=\{\mathbf{y} \in X: M(\mathbf{x,y}) \geq q_K\}.
\]

Arratia, Gordon \& Waterman \cite[Theorem 2]{arr} provide an asymptotic estimate
of the value of $q_K$ for large $m$. We will not state their theorem
precisely, and we omit some intermediate calculations, but the key estimate depends on the sum $p$ of squares of probabilities of characters in the alphabet:
\begin{align*} \label{e:ssquprobs}
 p:= \sum_{a \in \mathcal{A}} \mu(\{a\})^2 \in (0,1).
\end{align*}
The $K$ nearest neighbors
of a specific string $\mathbf{x}$ will typically
be strings whose longest common substring
with $\mathbf{x}$ has length at least
\[
q_K = \log_{1/p} (m^2 (1-p)) + \log_{1/p} (n/K)
= \frac{2 \log(m) + \log ((1 - p) n/K)}{- \log{p}}.
\]
On the other hand, replacing $K$ by 1, the
nearest neighbor of $\mathbf{x}$ will typically
share a common substring with $\mathbf{x}$ 
of length $q_1$, where
\[
q_1 = 
\frac{2 \log(m) + \log ((1 - p) n)}{- \log{p}}.
\]

Notice that 
$q_K \approx q_1 \ll m/K$. Thus for any distinct $\mathbf{x,y,z}\in X$, even if $\mathbf{y}$ and $\mathbf{z}$ are ranked highly by $\mathbf{x}$, the longest substring that $\mathbf{x}$ shares with $\mathbf{y}$ will typically be disjoint from that shared between $\mathbf{x}$ and $\mathbf{z}$. In other words, knowing that $\max(r_{\mathbf{x}}(\mathbf{y}), r_{\mathbf{x}}(\mathbf{z})) \leq K$ makes it virtually no more likely that $r_{\mathbf{y}}(\mathbf{z}) \leq K$, so $\mathbf{x}$'s information is virtually irrelevant to $\mathbf{y}$ even if $\mathbf{y}$ is a good friend of $\mathbf{x}$, and \textit{vice versa}, replacing friend by cofriend\footnote{The situation is not symmetric: $\mathbf{y}$ is a friend of $\mathbf{x}$ while $\mathbf{x}$ is a cofriend of $\mathbf{y}$.}.

This is precisely the problem we encountered for a random ranking system, if a bit less extreme. The friend of a friend principle is inapplicable 
for parameters such as the next example.

\textbf{Numerical example: }
In ``big data'' applications, we may have 
$m$ in the thousands, while $q_K$ may be in double digits.
For example, if $m = 2^{16}$, $n = 2^{33}$, $K = 2^5$, and $p = 2^{-4}$,
then $q_K = 15$ and $q_1 = 16$. In other words,
the $32$ nearest neighbors of some string $\mathbf{x}$
will typically share common substrings with
$\mathbf{x}$ of length $15$ to $17$.
Moreover these shared substrings will be different
for different neighbors, covering at most
a proportion $K q_K / m \approx 2^{-7}$
of the characters in $\mathbf{x}$.

\subsection{Powers of 2}
\label{sec:powersof2}
Take $\mathcal{X}=\textbf{R}$ with the usual distance, and let $X$ consist of the first $n$ nonnegative powers of 2. For example, the two nearest neighbors of 32 are 16 and 8. 
We do not know whether NND works here. It seems to lie somewhere between the opposite extremes occupied by the Paris metric of Section \ref{s:starleaves} and the longest common substring metric
of Section \ref{s:lcss}. For distinct $x,y$ with $r_x(y) \leq K$, there are on average about $K/2$ points $z$ for which $\max(r_x(z), r_y(z)) \leq K$. The corresponding quantity for the Paris metric is $K-1 \approx K$; for longest common substring it is $K/n \approx 0$.
We revisit the powers of 2 in Example \ref{ex:RankingSystemUnique}.


\section{Failure of NND}\label{s:ranking}

Our goal in this section is to explore what ``went wrong'' for NND in the longest common substring example. 
Our intuition is that the rankings $r_\mathbf{x}$ and $r_\mathbf{y}$ for distinct $\mathbf{x},\mathbf{y}$ are sufficiently ``uncorrelated'' or ``independent'' that $\mathbf{y}$'s information is essentially useless to $\mathbf{x}$ even if $\mathbf{y}$ is a friend or cofriend of $\mathbf{x}$. The same problem arose in the case of a random ranking system in Section \ref{s:uniformrandomRS}. But while it is natural that this would happen for a random ranking system, it is much more surprising in the context of a 
metric space, because intuitively, the triangle inequality should cause FOF to be helpful.

We now build a framework to formalize these ideas. With this framework in hand, we will prove the striking result that not only is a metric space insufficient to cause FOF to be helpful, but in fact in ``almost all'' metric spaces FOF fails to help.

\subsection{Metrizability and concordancy}
Our first step is to nail down just which ranking systems arise from a metric space.

\begin{defn}
A ranking system $\mathcal{S} = (S,\mathbf{r})$ is called \emph{metrizable} if there is a metric $\rho$ on $S$ that induces the rankings $\mathbf{r} := (r_x)_{x\in S}$, i.e.\ such that for each distinct $x,y,z \in S$, $\rho(x,y)<\rho(x,z)$ iff $r_x(y)<r_x(z)$.
\end{defn}

Goyal et al.\ \cite{goy} formalize ranking systems in a similar way,
but omit the concept of concordancy, which we now define. For a set $X$ and an integer $t$, let $\binom{X}{t}$ denote the set of $t$-element subsets of $X$. As in graph theory we frequently omit brackets for sets of size 2, e.g.\ $ab := \{a,b\}$.

\begin{defn}
Let $\mathcal{S} = (S,\mathbf{r})$ be a ranking system. For $x \in S$, let $\preccurlyeq_x$ be the linear order on
$S_x := \{ xy \;:\; y \in S \setminus \{x\}\}$ (here $xy$ is short for $\{x, y\}$)
defined by   
\[ \{x,r_x^{-1}(1)\} \preccurlyeq_x \{x,r_x^{-1}(2)\} \preccurlyeq_x \cdots \preccurlyeq_x \{x,r_x^{-1}(|S|-1)\}. \]
Formally\footnote{Recall that formally a partial order on $X$ is a (reflexive, antisymmetric, transitive) binary relation on $X$, i.e.\ a subset of $X \times X$. For example on $\{1,2,3,4\}$, the usual (linear) order $\leq$ \emph{is} the set $\{(1,1),(1,2),(1,3),(1,4),(2,2),(2,3),(2,4),(3,3),(3,4),(4,4)\}$. The statement ``$x \leq y$'' is nothing more than an abbreviation for ``$(x,y) \in \: \leq$.''}
$\preccurlyeq_x$ is a set of ordered pairs of 
elements of 
$S_x$; 
for example $\preccurlyeq_x$ contains the element $(\{x,r^{-1}_x(2)\},\{x,r^{-1}_x(5)\})$ (because $2\leq 5$). 
Let $R_\mathcal{S}$ be the union of these sets: 
\[ 
R_\mathcal{S} \;\; := \;\; \bigcup_{x \in S} \preccurlyeq_x \;; 
\]
this is a set of ordered pairs of elements of $\cup_{x\in S}S_x = \binom{S}{2}$. We call $\mathcal{S}$ \emph{concordant} if $R_\mathcal{S}$ is a subset of some partial order on $\binom{S}{2}$. In this case, the \emph{order type} of $\mathcal{S}$, denoted $\preccurlyeq_\mathcal{S}$, is the unique minimal partial order that contains $R_\mathcal{S}$ (i.e.\ the intersection of all partial orders containing $R_\mathcal{S}$). 
\end{defn}

Henceforth we shorten ``concordant ranking system'' to \textbf{CRS}.

\begin{ex}[CRS]
\label{ex:ConcRS}
Figure \ref{f:hassediagrams} shows two views of the ranking system $\mathcal{S}$ on $S=\{a,b,c,d,e\}$ with rankings 
\begin{align*} (1,2,3,4) &= (r_a(b),r_a(e),r_a(d),r_a(c)) \\
&= (r_b(a),r_b(c),r_b(d),r_b(e)) \\
&= (r_c(d),r_c(e),r_c(b),r_c(a)) \\
&= (r_d(c),r_d(e),r_d(a),r_d(b)) \\
&= (r_e(c),r_e(a),r_e(d),r_e(b)).
\end{align*}
The left of Figure \ref{f:hassediagrams} shows, respectively, the linear orders $\preccurlyeq_a$ on $S_a$; $\preccurlyeq_b$ on $S_b$; $\preccurlyeq_c$ on $S_c$; $\preccurlyeq_d$ on $S_d$; $\preccurlyeq_e$ on $S_e$. (These are called \emph{Hasse diagrams}; an arrow appears iff the lower item is less than the upper item and there is no item in between.) Because there exists a partial order on $\binom{S}{2} = \cup_{x \in S}S_x$ that simultaneously extends all of these linear orders, $\mathcal{S}$ is concordant. The order type $\preccurlyeq_\mathcal{S}$ of $\mathcal{S}$ is the minimal such order, i.e.\ the one that contains no information beyond what is forced by $R_\mathcal{S}$. Its Hasse diagram is on the right of Figure \ref{f:hassediagrams}.
\end{ex}

\begin{figure}
\caption{ \textit{Concordant ranking system: Example \ref{ex:ConcRS}.
The five digraphs on the left show the rankings 
$r_a(\cdot), r_b(\cdot), r_c(\cdot), r_d(\cdot), r_e(\cdot)$,
respectively. The digraph on the right illustrates the unique
minimal partial order extending these rankings to
all $\binom{5}{2}$ point pairs.
 }
} \label{f:hassediagrams}
\begin{center}
\tikzstyle{arrow} = [thick,->]
\begin{tikzpicture}[node distance=1.5cm]

\node (phantom) {};

\node (a4) [right of=phantom] {$ac$};
\node (a3) [below of=a4] {$ad$};
\node (a2) [below of=a3] {$ae$};
\node (a1) [below of=a2] {$ab$};

\node (b4) [right of=a4] {$be$};
\node (b3) [below of=b4] {$bd$};
\node (b2) [below of=b3] {$bc$};
\node (b1) [below of=b2] {$ba$};

\node (c4) [right of=b4] {$ca$};
\node (c3) [below of=c4] {$cb$};
\node (c2) [below of=c3] {$ce$};
\node (c1) [below of=c2] {$cd$};

\node (d4) [right of=c4] {$db$};
\node (d3) [below of=d4] {$da$};
\node (d2) [below of=d3] {$de$};
\node (d1) [below of=d2] {$dc$};

\node (e4) [right of=d4] {$eb$};
\node (e3) [below of=e4] {$ed$};
\node (e2) [below of=e3] {$ea$};
\node (e1) [below of=e2] {$ec$};

\draw [arrow] (a1) -- (a2);
\draw [arrow] (a2) -- (a3);
\draw [arrow] (a3) -- (a4);

\draw [arrow] (b1) -- (b2);
\draw [arrow] (b2) -- (b3);
\draw [arrow] (b3) -- (b4);

\draw [arrow] (c1) -- (c2);
\draw [arrow] (c2) -- (c3);
\draw [arrow] (c3) -- (c4);

\draw [arrow] (d1) -- (d2);
\draw [arrow] (d2) -- (d3);
\draw [arrow] (d3) -- (d4);

\draw [arrow] (e1) -- (e2);
\draw [arrow] (e2) -- (e3);
\draw [arrow] (e3) -- (e4);

\node (ac) [right of=e4,xshift=2.5cm,yshift=.5cm] {$ac$};
\node (ad) [below of=ac,yshift=-.4cm] {$ad$};
\node (de) [below of=ad,yshift=.4cm] {$de$};
\node (ae) [below of=de,yshift=.4cm] {$ae$};
\node (ab) [below of=ae] {$ab$};

\node (be) [right of=ac,xshift=1cm,yshift=.8cm] {$be$};
\node (bd) [below of=be] {$bd$};
\node (bc) [below of=bd,yshift=-.65cm] {$bc$};
\node (ce) [below of=bc,yshift=-.65cm] {$ce$};
\node (cd) [below of=ce] {$cd$};

\draw [arrow] (ab) -- (ae);
\draw [arrow] (ae) -- (de);
\draw [arrow] (de) -- (ad);
\draw [arrow] (ad) -- (ac);

\draw [arrow] (cd) -- (ce);
\draw [arrow] (ce) -- (bc);
\draw [arrow] (bc) -- (bd);
\draw [arrow] (bd) -- (be);

\draw [arrow] (ab) -- (bc);
\draw [arrow] (ce) -- (ae);
\draw [arrow] (bc) -- (ac);
\draw [arrow] (ad) -- (bd);

\end{tikzpicture}

\end{center}
\end{figure}
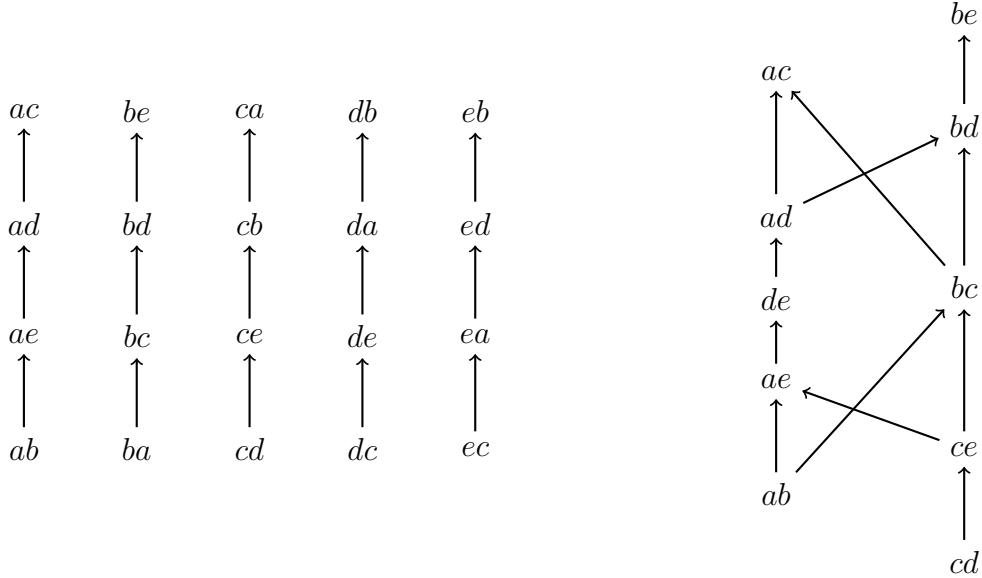

\begin{ex}[Non-concordant ranking system]
\label{NonConcRS}
The (unique up to isomorphism) smallest non-concordant ranking system $\mathcal{N}$ has $S=\{a,b,c\}$ and $(1,2)=(r_a(b),r_a(c))=(r_b(c),r_b(a))=(r_c(a),r_c(b))$, and thus $ab \preccurlyeq_a ac$, $bc \preccurlyeq_b ba$, and $ca \preccurlyeq_c cb$. Any relation $R$ containing $R_\mathcal{N}$ satisfies $ab\;R\;ac\;R\;bc\;R\;ab$, and thus cannot be both transitive and antisymmetric.
\end{ex}

``Almost no'' ranking systems are concordant, in the sense that if a set $S$ is made into a ranking system uniformly randomly as in Section \ref{s:uniformrandomRS}, then the probability that it is concordant tends to 0 as $|S| \rightarrow \infty$; 
for such a uniformly chosen ranking is expected to contain $\binom{|S|}{3}/4$ triples $\{x,y,z\} \subseteq S$ that are isomorphic to the $\mathcal{N}$ of Example \ref{NonConcRS}, and even the lack of such a triple by no means implies concordance.

It turns out that the concordant ranking systems
are precisely the metrizable ones. 

\begin{lem} \label{l:ranks2metric}
\label{lem:ranking}
For a ranking system $\mathcal{S} = (S,\mathbf{r})$, the following are equivalent:
\begin{itemize}
\item[(A)] $\mathcal{S}$ is concordant;
\item[(B)] $\mathcal{S}$ is metrizable.

\end{itemize}
\end{lem}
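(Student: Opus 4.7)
The plan is to establish each implication separately; once the right viewpoint is fixed, neither direction is very difficult.

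For (B) $\Rightarrow$ (A), I would exhibit an explicit partial order on $\binom{S}{2}$ that contains $R_\mathcal{S}$. Given a metric $\rho$ on $S$ inducing $\mathbf{r}$, define
\[
P := \{(\alpha, \beta) \in \tbinom{S}{2} \times \tbinom{S}{2} \;:\; \alpha = \beta \text{ or } \rho(\alpha) < \rho(\beta) \},
\]
writing $\rho(\{x,y\}) := \rho(x,y)$. Reflexivity is immediate; antisymmetry follows because $\rho(\alpha) < \rho(\beta) < \rho(\alpha)$ is impossible; transitivity is routine by cases on which disjunct holds. To check that $R_\mathcal{S} \subseteq P$, let $(xa, xb) \in \preccurlyeq_x$. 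Either $a = b$ (so $xa = xb$), or $r_x(a) < r_x(b)$ strictly (since $r_x$ is a bijection), in which case metrizability gives $\rho(x,a) < \rho(x,b)$ and so $(xa,xb) \in P$. Thus $P$ is a partial order extending $R_\mathcal{S}$, witnessing concordance.

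For (A) $\Rightarrow$ (B), which is the substantive direction, my strategy has two steps: first linearize the order, then realize it by distances compressed into a short interval. By concordance, pick any partial order $Q$ on $\binom{S}{2}$ containing $R_\mathcal{S}$ (for instance $\preccurlyeq_\mathcal{S}$). By Szpilrajn's theorem---a short greedy argument for finite posets---extend $Q$ to a linear order $\leq^*$, and enumerate $\binom{n}{2}$ pairs as $\alpha_1 <^* \alpha_2 <^* \cdots <^* \alpha_{\binom{n}{2}}$. Set
\[
\rho(x,x) := 0, \qquad \rho(\alpha_i) := 1 + \frac{i}{2\binom{n}{2}}, \quad i = 1,\ldots, \tbinom{n}{2},
\]
so every inter-point distance lies in $[1, 3/2]$. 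Symmetry and positivity are built in. The triangle inequality $\rho(x,z) \leq \rho(x,y) + \rho(y,z)$ is trivial: when $x, y, z$ are distinct the right-hand side exceeds $2 > 3/2 \geq \rho(x,z)$, and the degenerate cases (with two of $x, y, z$ equal) collapse to $0 \leq 2\rho(\cdot,\cdot)$ or $\rho(x,z) \leq \rho(x,z)$. Finally, to verify that $\rho$ induces $\mathbf{r}$, observe that for $x \in S$ and distinct $a, b \in S \setminus \{x\}$,
\[
r_x(a) < r_x(b) \iff (xa, xb) \in \preccurlyeq_x \subseteq Q \subseteq \leq^* \iff xa <^* xb \iff \rho(x,a) < \rho(x,b),
\]
where the middle equivalence uses $a \neq b$ to rule out the reflexive case and the last uses the definition of $\rho$.

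I do not foresee a genuine obstacle in either direction. The only external input is Szpilrajn's theorem for finite sets, and the key trick is that on any finite set we may compress all nonzero distances into an interval $[1, 3/2]$ so that the triangle inequality is free, allowing us to realize any prescribed linear ordering of the pairs by a metric. Morally, the lemma is saying that metrizability records no information beyond the global compatibility of the local rankings.
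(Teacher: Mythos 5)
Your proof is correct, and the (B) $\Rightarrow$ (A) direction is essentially identical to the paper's. For (A) $\Rightarrow$ (B) you take a genuinely different, more elementary route: after picking a linear extension $\leq^*$ of $\preccurlyeq_{\mathcal{S}}$, you simply declare the $\binom{n}{2}$ distances directly as an increasing sequence crammed into the interval $(1, 3/2]$, so the triangle inequality is free (any two summands exceed any single term). The paper instead constructs an explicit embedding $f: S \to \ell_\infty^N$ with $N = \binom{n}{2}$, assigning each pair $xy$ its own coordinate and arranging $\|f(x)-f(y)\|_\infty = 2 + 2\sigma(xy)/N$; the triangle inequality is then inherited from the ambient normed space rather than checked by hand. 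The underlying mechanism is the same in both arguments --- distances compressed into a band $(c, 2c)$ make every triangle inequality vacuous --- but your version makes this explicit and avoids the detour through a normed-space embedding, while the paper's version buys a slightly stronger conclusion (embeddability in $\ell_\infty^N$, which the authors connect in a remark to ordinal embedding results). Both are sound; yours is shorter and more self-contained, theirs is more geometric and links to related literature.
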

\textbf{Remark:} Lemma \ref{l:ranks2metric} is an easier result than
 \cite[Theorem 4] {kle14} on \textit{ordinal embedding}, for which
 a linear order on all $\binom{n}{2}$ inter-point distances is the input, and a Euclidean embedding is the output.

\begin{proof}
(B) $\Rightarrow$ (A). Suppose $\rho$ is
 a metric on $S$ inducing $\mathbf{r}$.
Since $\mathbf{r}$ has no ties, we are assured that 
$\rho(x,y) < \rho(x,w)$ if $r_x(y) < r_x(w)$.
Define a binary relation $\preccurlyeq$ on $\binom{S}{2}$ by
 $xy \preccurlyeq zw$ iff either $xy = zw$ or 
 $\rho(x,y) < \rho(z,w)$ (well-defined by the symmetry of $\rho$). 
 
 Evidently this is a partial order since it is reflexive ($xy \preccurlyeq xy$),
 and inherits antisymmetry and transitivity from the usual order on $[0,\infty)$, the codomain of $\rho$. 
 Now for any $x \in S$, and $y,z \in S \setminus \{x\}$, we have
\[ xy \preccurlyeq_x xz \quad \Longleftrightarrow \quad
r_x(y) \leq r_x(z) \quad \Longleftrightarrow \quad 
\rho(x,y) \leq \rho(x,z) \quad \Longleftrightarrow \quad
xy \preccurlyeq xz, \]
using the fact that $\rho$ induces $\mathbf{r}$ for the middle equivalence. Thus $\preccurlyeq$ extends each $\preccurlyeq_x$, so it extends $R_\mathcal{S}$.

(A) $\Rightarrow$ (B). Set $N:= \binom{n}{2}$ and let $\ell^N_\infty$ denote the $N$-dimensional real Banach space equipped with the sup (max) norm $\|\cdot\|_\infty$. Our goal is to exhibit an embedding $f: S \rightarrow \ell^N_\infty$ satisfying
\begin{align}
    \label{eq:LemConcMetrGoal}
    \text{for each distinct $x,y,z \in S$, $\|f(x)-f(y)\|_\infty < \|f(x)-f(z)\|_\infty$ iff $x$ prefers $y$ to $z$.}
\end{align}

Let $\preccurlyeq$ be any linear order on $\binom{S}{2}$ that 
extends\footnote{
Here a \emph{linear extension} $\preccurlyeq_l$ of a partial order $\preccurlyeq_p$ is a linear order on the same ground set as $\preccurlyeq_p$ satisfying $x \preccurlyeq_l y$ whenever $x \preccurlyeq_p y$. Every partial order has a linear extension.}
$\preccurlyeq_\mathcal{S}$. Define $\sigma:\binom{S}{2}\rightarrow [N]:=\{1, 2, \ldots, N\}$ by  
\[ \sigma(xy) = \left|\left\{ zw \in \tbinom{S}{2} \mid zw\preccurlyeq xy\right\}\right| \]
($\sigma$ enumerates the elements of $\binom{S}{2}$ from ``bottom up'' according to $\preccurlyeq$). Let $\preccurlyeq'$ be an arbitrary linear order on $S$ (needed for technical convenience only). 
For $x,y,z \in S$ with $y \neq z$ and $y \preccurlyeq' z$, set the $\sigma(yz)$th coordinate of $f(x)$ as:
\[ f(x)_{\sigma(yz)} = 
\begin{cases}
\;\;\; 1 + \sigma(yz)/N \quad &\text{if } x=y \\
-1 -\sigma(yz)/N \quad &\text{if } x=z \\
\;\;\; 0 \quad &\text{otherwise.}
\end{cases} \]
(
To see ``what $f$ does,'' it may be helpful to read Example \ref{ex:embedding} before continuing.)

We must check \eqref{eq:LemConcMetrGoal}. Observe that for all $x \in S$ and $zw\in \binom{S}{2}$, each coordinate satisfies $|f(x)_{\sigma(zw)}| \leq 2$. Moreover, for distinct $x,y \in S$, the only coordinate on which $f(x)$ and $f(y)$ are both nonzero is the $\sigma(xy)$th,
and 
\[ |f(x)_{\sigma(xy)} - f(y)_{\sigma(xy)}| = 2 + 2\sigma(xy)/N, \]
which is strictly greater than 2. Thus 
\[
\|f(x) - f(y)\|_\infty = 2+2\sigma(xy)/N.
\] This completes the proof: $\preccurlyeq$ extends $\preccurlyeq_x$, so $x$ prefers $y$ to $z$ iff $xy \preccurlyeq xz$ iff $\sigma(xy) < \sigma(xz)$ iff $\|f(x) - f(y)\|_\infty < \|f(x) - f(z)\|_\infty$.
\end{proof}

\begin{ex}[Embedding a CRS into $\ell_{\infty}^N$]
\label{ex:embedding}
Let $\mathcal{S}$ be as in Example \ref{ex:ConcRS}, and suppose we want to embed $S$ into $\ell_\infty^{10}$ as described in the proof of Lemma \ref{lem:ranking}. We first pick a linear extension $\preccurlyeq$ 
of $\preccurlyeq_\mathcal{S}$, say the one shown in the top row of the following table. (The reader can verify by comparing to the Hasse diagram for $\preccurlyeq_\mathcal{S}$ that $\preccurlyeq$ is in fact such an extension.)
Next we pick an arbitrary linear order $\preccurlyeq'$ on $S$,
which will determine the signs in the matrix below; say $\preccurlyeq'$ orders $S$ alphabetically. 
Then the function $f$ constructed in the proof of Lemma \ref{lem:ranking} is given in the following table, i.e.\ the value at row $x$, column $yz$ is $f(x)_{\sigma(yz)}$ (where a blank entry means 0).
\[ 
\begin{array}{*{11}r}
 &ab&\preccurlyeq cd&\preccurlyeq ce&\preccurlyeq ae&\preccurlyeq bc&\preccurlyeq de&\preccurlyeq ad&\preccurlyeq ac&\preccurlyeq bd&\preccurlyeq be\\
a& 1.1&    &    & 1.4&    &    & 1.7& 1.8&    &    \\
b&-1.1&    &    &    & 1.5&    &    &    & 1.9& 2  \\
c&    & 1.2& 1.3&    &-1.5&    &    &-1.8&    &    \\
d&    &-1.2&    &    &    & 1.6&-1.7&    &-1.9&    \\
e&    &    &-1.3&-1.4&    &-1.6&    &    &    &-2  
\end{array}
\]
Note: each column $xy$ has nonzero entries exactly at rows $x,y$; all column sums are 0; and the absolute value of every nonzero entry is in $(1,2]$. All of this implies that the $\ell_\infty$-difference between rows $x$ and $y$ is realized uniquely at column $xy$. This, along with the fact that the absolute values of the nonzero entries increase from left to right (in $\preccurlyeq$ order), implies \eqref{eq:LemConcMetrGoal}. 
\end{ex}

\subsection{Sampling from the CRSes}
We saw in Section \ref{s:uniformrandomRS} that FOF-based algorithms fail for generic ranking systems. We aim to show that FOF-based algorithms fail even for generic \emph{concordant} ranking systems.

Given the concordancy restriction, the word ``generic'' presents a difficulty. In view of the comment after Example \ref{NonConcRS}, a uniformly random ranking system will almost certainly fail to be concordant. 


We know of no feasible method to sample uniformly\footnote{
Given a concordant ranking system, there is a Markov chain
whose stationary state gives a uniform random linear extension to $\binom{S}{2}$: see Huber \cite{hub}.
} from the CRSes on a set $S$. As there are $(|S|-1)!^{|S|}$ ranking systems, simply listing them and marking the concordant ones is impractical whenever $|S|>6$ (observe $6!^7 \approx 10^{22}$). 

Here is an alternative approach. Given a linear order $\preccurlyeq$ on $\binom{S}{2}$, define, for each distinct $x,y \in S$, 
\begin{align}
\label{eq:ranksFromLinearOrder}
r_x(y) = \left|\left\{z \in S \setminus \{x\} \;:\; xz\preccurlyeq xy\right\}\right|.
\end{align}
Equivalently, define $\preccurlyeq_x$ as the restriction of $\preccurlyeq$ to $S_x$. This corresponds to reading row $x$ in Example \ref{ex:embedding} and listing the column headings of its nonzero entries left to right.

Evidently the resulting ranking system $(S,(r_x)_{x\in S})$ is concordant, because there exists a partial order extending $\cup_{x\in S}\preccurlyeq_x$, namely $\preccurlyeq$.

\begin{defn}
If $\preccurlyeq$ is a linear order on $\tbinom{S}{2}$ for some finite set $S$, let $\varphi(\preccurlyeq)$ denote the CRS on $S$ defined by \eqref{eq:ranksFromLinearOrder}:
\begin{align}
\label{eq:phimap}
\{\text{\emph{linear orders on $\tbinom{S}{2}\} \quad \xlongrightarrow{\varphi} \quad \{$CRSes on $S\}$. }}
\end{align}
\end{defn}

Observe that the preimages under $\varphi$ of a CRS $\mathcal{S}$ are precisely the linear extensions of $\preccurlyeq_\mathcal{S}$. (This is tautological: by construction $\preccurlyeq$ extends each $\preccurlyeq_x$ of $\varphi(\preccurlyeq)$, 
and by definition $\preccurlyeq_{\varphi(\preccurlyeq)}$ is the \emph{minimal} partial order that does so,
so $\preccurlyeq$ must extend $\preccurlyeq_{\varphi(\preccurlyeq)}$.) Since every partial order has a linear extension, $\varphi$ is surjective. However it is not injective. For example the $\preccurlyeq_\mathcal{S}$ of Example \ref{ex:ConcRS} has many linear extensions, i.e.\ preimages. 
We explore this non-injectivity at length in Section \ref{sec:varphi}.

Use the term \emph{generic} CRS on $S$ to refer to the image
$\varphi(\preccurlyeq)$ of a uniformly random 
linear order $\preccurlyeq$ on $\binom{S}{2}$. This corresponds to choosing a metric on $S$ for which the inter-point distances are randomized. Let us reiterate that a \emph{generic} CRS is far from
being a uniformly random CRS.

\subsection{NND fails for a generic CRS}

As in Section \ref{s:uniformrandomRS} we analyze scheduled pointwise 
version NND of
Section \ref{sec:pointwise},
choosing this variant only for convenience.

 \begin{prop}
 \label{prop:KNNfails}
 Let $\preccurlyeq$ be a uniformly random linear order on $\binom{S}{2}$. Running scheduled pointwise NND on $\varphi(\preccurlyeq)$,
 for each $x\in S$ the expected number of rounds needed
 before $x$'s friend set contains at least $\lceil K/2\rceil$ elements of  $B_x:=\{y \in X \setminus \{x\} \;:\; r_x(y) \leq K\}$
 is at least about $n^2/(2K^2)$.
 \end{prop}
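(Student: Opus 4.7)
The plan is to adapt the argument for a uniformly random ranking system (Section \ref{s:uniformrandomRS}) to the generic CRS setting by substituting the direct independence of $r_x, r_y$ with a symmetry argument that exploits the uniformity of $\preccurlyeq$. Fix $x \in S$. In scheduled pointwise NND with friend list requests, $x$'s friend set $F_t(x)$ updates only during $x$'s own turn in the schedule, once per pass of $n$ rounds; at such an update, $x$ selects the $K$ top-ranked (under $r_x$) elements from a pool $U_t \subseteq S \setminus \{x\}$ of size at most $K^2$.

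Let $V_{t-1} \subseteq S \setminus \{x\}$ denote the set of $y$ for which the pair $xy \in \binom{S}{2}$ has appeared in at least one ranking query through pass $t-1$, and $N_t := U_t \setminus V_{t-1}$ the fresh elements of the pool. Bookkeeping shows $|V_{t-1}|$ grows by $O(K^2)$ per pass (accounting for both $x$'s own updates and for cross-queries made when some other $y$ updates with $x$ in its pool), so $|V_{t-1}| = o(n)$ throughout the target regime $T = O(n/K^2)$. Since any element entering $F_T(x)$ must have been in $x$'s pool at some pass $\leq T$, and only $B_x$-elements seen afresh in some $N_t$ can add to $|F_T(x) \cap B_x|$, we obtain $|F_T(x) \cap B_x| \leq \sum_{t \leq T} |N_t \cap B_x|$ up to lower-order terms.

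The technical heart is a symmetry lemma: conditional on the algorithmic history $H_{t-1}$, any two fresh points $z_1, z_2 \in S \setminus (\{x\} \cup V_{t-1})$ satisfy $P(r_x(z_1) = k \mid H_{t-1}) = P(r_x(z_2) = k \mid H_{t-1})$ for all $k$. To prove this, consider the involution $\sigma$ on linear orders of $\binom{S}{2}$ that swaps the positions of the two pairs $xz_1, xz_2$ and fixes every other pair's position. This is measure-preserving for the uniform distribution on linear orders of $\binom{S}{2}$, and by freshness it fixes every comparison outcome in $H_{t-1}$ (since no recorded comparison involves pair $xz_1$ or pair $xz_2$). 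Even cross-queries of the form ``pair $xy$ versus pair $yw$'' (arising during $y \neq x$'s update with $x$ in its pool) are preserved, because they involve neither $xz_1$ nor $xz_2$. Combined with $\sum_{z \neq x} P(z \in B_x \mid H_{t-1}) = K$ and non-negativity of the $|V_{t-1}|$ non-fresh terms, the common probability $p_H := P(z \in B_x \mid H_{t-1})$ for fresh $z$ satisfies
\[
p_H \;\leq\; \frac{K}{n - 1 - |V_{t-1}|} \;=\; (1+o(1)) \, \frac{K}{n}.
\]

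Combining, $E[|N_t \cap B_x| \mid H_{t-1}] \leq |N_t| \cdot p_H \leq (1+o(1)) K^3/n$, and summing over $t \leq T$ gives $E[|F_T(x) \cap B_x|] \leq (1+o(1)) T K^3/n$. For this to reach $\lceil K/2 \rceil$ requires $T \geq (1-o(1)) \, n/(2K^2)$, which translates (at $n$ rounds per pass) to the asserted $\approx n^2/(2K^2)$ rounds. The main obstacle is formulating the symmetry lemma with the correct involution: the naive transposition of points $z_1 \leftrightarrow z_2$ would also swap non-$x$-pairs $\{w, z_1\} \leftrightarrow \{w, z_2\}$ and thus alter cross-queries in $H_{t-1}$. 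The key insight is that swapping only the two $x$-pairs $xz_1, xz_2$ as elements of $\binom{S}{2}$, leaving every non-$x$-pair untouched, evades this issue entirely and yields the conditional exchangeability on which the whole argument rests.
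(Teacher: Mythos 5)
Your proof is correct and reaches the same conclusion, but by a genuinely more careful route than the paper's. The paper invokes Lemma \ref{lem:permIndep} to conclude that $r_x$ (determined by $\preccurlyeq|_{S_x}$) is independent of $\mathbf{r'}$ (determined by $\preccurlyeq|_{\binom{S}{2}\setminus S_x}$), and then informally appeals to the argument of Section \ref{s:uniformrandomRS} that each new candidate $x$ encounters is ``uniformly random from her perspective.'' That unconditional independence, however, does not immediately survive conditioning on the algorithmic history $H_{t-1}$, which reveals partial information about $\preccurlyeq|_{S_x}$ through $x$'s own past updates. You close this gap by conditioning on $H_{t-1}$ explicitly and proving a conditional exchangeability lemma via the measure-preserving involution that swaps the positions of the two pairs $xz_1, xz_2$ in the linear order while fixing every other pair's position. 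Your definition of freshness (that neither pair $xz_1$ nor $xz_2$ appears in any recorded comparison, including cross-queries of the form $xy$ versus $yw$ made by some $y \neq x$) is exactly what is needed for this involution to fix $H_{t-1}$, and your closing remark---that one must swap the two pairs $xz_1, xz_2$ as elements of $\binom{S}{2}$ rather than transposing the points $z_1, z_2$ in $S$---correctly identifies the pitfall that would otherwise disturb non-$x$ pairs. The counting that follows ($p_H \leq K/(n-1-|V_{t-1}|)$, $O(K^2)$ fresh points per relevant pass, hence $\Omega(n/K^2)$ passes and $\Omega(n^2/K^2)$ rounds) matches the paper's. In short: the paper's proof is shorter because it leaves the conditional step implicit; yours is longer because it makes that step rigorous.
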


Proposition \ref{prop:KNNfails} follows easily from the following elementary fact, whose proof we omit.

\begin{lem}
\label{lem:permIndep}
Let $\preccurlyeq$ be a uniformly chosen linear order on a finite set $X$, and let $A,B \subseteq X$ be disjoint. Then the random variables $\preccurlyeq\hspace{-4pt}\vert_A$ (that is, $\preccurlyeq$ restricted to $A$) and $\preccurlyeq\hspace{-4pt}|_B$ are independent.
\end{lem}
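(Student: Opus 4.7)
The plan is to reduce the claim to a direct counting argument using the fact that the joint distribution of the pair $(\preccurlyeq|_A, \preccurlyeq|_B)$ is uniform on the Cartesian product of the sets of linear orders on $A$ and on $B$. Once that uniformity is established, independence is immediate: if the joint distribution of two discrete random variables is uniform on a product set, the variables are independent with uniform marginals.

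To carry this out, I would set $n := |X|$, $a := |A|$, $b := |B|$, and fix an arbitrary pair $(\alpha, \beta)$, where $\alpha$ is a linear order on $A$ and $\beta$ is a linear order on $B$. The key step is to count the number of linear orders $\preccurlyeq$ on $X$ satisfying $\preccurlyeq|_A = \alpha$ and $\preccurlyeq|_B = \beta$. Identifying a linear order on $X$ with a listing of its $n$ elements in increasing order, one chooses the $a$ positions among the $n$ slots to be occupied by elements of $A$, which (since $\alpha$ fixes their relative order) forces their placement; then chooses $b$ positions among the remaining $n - a$ slots for elements of $B$, again forced by $\beta$; finally, the remaining $n - a - b$ elements of $X \setminus (A \cup B)$ are arranged in $(n - a - b)!$ ways. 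This gives
\[
\binom{n}{a}\binom{n-a}{b}(n-a-b)! \; = \; \frac{n!}{a!\,b!},
\]
a quantity independent of the choice of $\alpha$ and $\beta$.

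Since there are $n!$ linear orders on $X$ in total, the probability that $\preccurlyeq|_A = \alpha$ and $\preccurlyeq|_B = \beta$ equals $1/(a!\,b!)$, which factors as $(1/a!)(1/b!)$. Summing over $\beta$ shows $\Pr[\preccurlyeq|_A = \alpha] = 1/a!$, and similarly $\Pr[\preccurlyeq|_B = \beta] = 1/b!$. Hence the joint probability equals the product of the marginals for every $(\alpha, \beta)$, which is the definition of independence.

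There is no real obstacle here; the only thing to be mindful of is that the disjointness of $A$ and $B$ is essential and enters precisely in the counting step, where the placements of $A$ and $B$ into the $n$ slots do not interfere. (As an alternative route I would mention in a remark: realize the uniform linear order by assigning i.i.d.\ continuous random variables $\{U_x\}_{x \in X}$ and ordering by value; then $\preccurlyeq|_A$ is a function of $\{U_x\}_{x \in A}$ and $\preccurlyeq|_B$ of the disjoint family $\{U_x\}_{x \in B}$, so independence follows from the independence of the underlying $U_x$'s.)
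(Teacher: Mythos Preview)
Your argument is correct. The counting is exactly right: choosing the $a$ positions for $A$, then the $b$ positions for $B$ among the remaining slots, then ordering the rest, gives $n!/(a!\,b!)$ extensions regardless of the particular $\alpha$ and $\beta$, and from there independence with uniform marginals follows immediately. Disjointness is used precisely where you say it is.

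As for comparison with the paper: there is nothing to compare against, since the paper explicitly omits the proof of this lemma, calling it an ``elementary fact.'' Your counting argument is a clean and complete justification of that fact. The alternative you sketch---realizing the order via i.i.d.\ continuous labels $\{U_x\}_{x\in X}$ and noting that $\preccurlyeq|_A$ and $\preccurlyeq|_B$ are functions of disjoint independent subfamilies---is equally valid and perhaps even shorter; either would serve as the proof the paper leaves out.
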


\begin{proof}[Proof of Proposition \ref{prop:KNNfails}]
Recall $S_x := \{ xy : y \in S \setminus \{x\}\}$. Fix $x \in X$, and
let $A=\binom{S}{2} \setminus S_x$, i.e. point pairs not including $x$.
Take
\[
\mathcal{S'}=(S\setminus \{x\},\mathbf{r'}) = 
\varphi(\preccurlyeq\hspace{-4pt}|_A).
\]
Then Lemma \ref{lem:permIndep} implies that $r_x$, that is $x$'s ranking in $\varphi(\preccurlyeq)$, is independent of $\mathbf{r'}$, that is anyone else's ranking of any third party. 
In other words, the fact that $y$ is at some time a friend of $x$ does not
make $y$'s friend $z$ any better for $x$
than a uniformly selected vertex.
The rest of the proof exactly follows Section \ref{s:uniformrandomRS}:
in each of $x$'s friend list requests she discovers at most $K^2$ new points, which are from her perspective uniformly random. 
Thus $x$ expects to have to meet about $n/2$ points, over 
at least $n/(2K^2)$ relevant rounds, to find $\lceil K/2 \rceil$ points of $B_x$.
\end{proof}

\subsection{CRS perspective on longest common substring}
Consider three distinct random strings $\mathbf{x,y,z}$ generated as in Section \ref{s:lcss}.
Suppose $\mathbf{a}$, $\mathbf{b}$, and $\mathbf{c}$
are the longest common substrings between $\mathbf{x}$ and $\mathbf{y}$,
$\mathbf{y}$ and $\mathbf{z}$, and $\mathbf{z}$ and $\mathbf{x}$ respectively.
The lengths of $\mathbf{a, b, c}$ will not exceed $q_1 \ll m$.
Since $\mathbf{a}$ and $\mathbf{b}$ are so short, and likely
to occur in different parts of $\mathbf{y}$,
their existence does not help to supply a long $\mathbf{c}$.
In other words, for $q = 1, 2, \ldots, q_1$
\[
\Pr[M(\mathbf{x,z}) \geq q \mid 
\min \{ M\mathbf{(x,y}), M(\mathbf{y,z})\} \geq q]
\approx \Pr[M(\mathbf{x,z}) \geq q ].
\]
This is essentially the situation when we choose a generic CRS with random inter-point distances, 
which leads to quadratic run time for NND. 

\subsection{Digression: properties of the map $\varphi$}
\label{sec:varphi}

Here we begin to explore the many mysteries of the map $\varphi$. We believe this is fertile ground for future research.

Fix a set $S$ of size $n$. Let $L_n$ denote the
set of $\binom{n}{2}!$ linear orders on $\binom{S}{2}$,
and let $R_n$ denote the set of CRSes on $S$.
Our vantage point for insight into
the map (\ref{eq:phimap}) $\varphi:L_n \to R_n$ is the following graphical model.

Consider the graph $G$ on vertex set $L_n$, taking
$\preccurlyeq$ and $\preccurlyeq'$ to be adjacent in $G$ iff $\preccurlyeq'$ agrees with $\preccurlyeq$
after swapping the order of consecutive items $xy$ and $zw$; here the intersection of $\{x, y\}$ and $\{z, w\}$ could be non-empty.
Call $G$ the \textbf{equivalent metrics graph}. Any linear order has $d:=\binom{n}{2}-1$ pairs
of consecutive items, so $G$ is $d$-regular. Any linear order can be rearranged into any other via some sequence of swaps of consecutive items, so $G$ is connected.

Color the edges white or black as follows.
An edge between $\preccurlyeq$ and $\preccurlyeq'$
is colored white if \linebreak $\varphi(\preccurlyeq) = \varphi(\preccurlyeq')$, i.e.\
if these two linear orders map to the same ranking
system, and colored black otherwise.

\begin{prop}
\label{prop:swap}
An edge of $G$ joining $\preccurlyeq$ and $\preccurlyeq'$ is white iff the consecutive items $xy,zw$ that are swapped between $\preccurlyeq$ and $\preccurlyeq'$ are disjoint.
\end{prop}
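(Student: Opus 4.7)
The plan is to reduce the statement to a routine case analysis by unpacking $\varphi$. Recall that $\varphi(\preccurlyeq)$ is determined by the restrictions $\preccurlyeq\!\!|_{S_u}$ for $u \in S$, since $\preccurlyeq_u$ of the resulting CRS is exactly this restriction. So the first step is to observe the following reformulation: for any two linear orders $\preccurlyeq,\preccurlyeq'$ on $\binom{S}{2}$, $\varphi(\preccurlyeq) = \varphi(\preccurlyeq')$ iff $\preccurlyeq\!\!|_{S_u} \,=\, \preccurlyeq'\!\!|_{S_u}$ for every $u \in S$.

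Next, I would note the simple but crucial combinatorial fact about $S_u$: for a pair $ab \in \binom{S}{2}$, we have $ab \in S_u$ iff $u \in \{a,b\}$. In particular, two distinct pairs $xy, zw \in \binom{S}{2}$ lie in the same $S_u$ iff $u \in \{x,y\} \cap \{z,w\}$.

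Now carry out the two directions. Suppose $\preccurlyeq'$ is obtained from $\preccurlyeq$ by swapping consecutive items $xy, zw$. For the (easy) ``disjoint $\Rightarrow$ white'' direction, if $\{x,y\}\cap\{z,w\}=\varnothing$, then by the previous observation no single $S_u$ contains both $xy$ and $zw$, so swapping them leaves every restriction $\preccurlyeq\!\!|_{S_u}$ unchanged; by the reformulation, $\varphi(\preccurlyeq)=\varphi(\preccurlyeq')$. For the converse, suppose $\{x,y\}\cap\{z,w\}\neq\varnothing$. Since $xy\neq zw$, this intersection is a single element, say $u$. Then $\{xy, zw\}\subseteq S_u$, and because $xy$ and $zw$ are consecutive in $\preccurlyeq$ they are also consecutive in $\preccurlyeq\!\!|_{S_u}$ (nothing from $S_u$ can slip between them in the global order). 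Swapping them thus strictly changes $\preccurlyeq\!\!|_{S_u}$, hence changes $r_u$, hence changes $\varphi(\preccurlyeq)$; the edge is black.

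There is really no obstacle here beyond being careful about what ``consecutive'' means under restriction to $S_u$, which is the one place the argument needs a moment of attention: a subsequence of a linear order preserves the relative order, so items that are adjacent in the full order are a fortiori adjacent in any sub-order containing both of them. With that observation in hand, both directions are immediate from the reformulation and the intersection criterion for membership in $S_u$.
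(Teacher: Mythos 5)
Your proof is correct and is essentially the paper's argument, just slightly more scaffolded: the paper directly observes that $r_u(v)$ as computed by \eqref{eq:ranksFromLinearOrder} is unaffected by a disjoint swap, and that a non-disjoint swap with $\{x,y\}\cap\{z,w\}=\{u\}$ exchanges $r_u(y)$ and $r_u(z)$, which is exactly your restriction-to-$S_u$ reformulation. Both versions rest on the same observations about which $S_u$ contain the swapped pair and the preservation of adjacency under restriction.
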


\begin{proof}
Simply observe that if $\{x, y\}\cap \{z, w\} =\varnothing$, then for any $u \neq v \in S$, the computation of $r_u(v)$ in \eqref{eq:ranksFromLinearOrder} does not change when $\preccurlyeq$ is replaced by $\preccurlyeq'$. If on the other hand $x=w$  (say), then $r_x(y)$ and $r_x(z)$ swap when $\preccurlyeq$ is replaced by $\preccurlyeq'$.
\end{proof}

\begin{prop}
\label{prop:pathofswaps}
Let $\preccurlyeq$ and $\preccurlyeq'$ be linear orders on $\binom{S}{2}$. Then $\varphi(\preccurlyeq)=\varphi(\preccurlyeq')$ iff there is a path of white edges joining $\preccurlyeq$ and $\preccurlyeq'$ in $G$.
\end{prop}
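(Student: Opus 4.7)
\medskip

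\noindent\textbf{Proof proposal.} The $(\Leftarrow)$ direction is immediate from Proposition \ref{prop:swap}: traversing a white edge does not change $\varphi$, so induction on path length gives $\varphi(\preccurlyeq)=\varphi(\preccurlyeq')$ for any pair connected by a white path. The substance is the $(\Rightarrow)$ direction, which I would prove by induction on the number of inversions between $\preccurlyeq$ and $\preccurlyeq'$, effectively running a bubble sort that is constrained to use only ``white'' swaps.

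The plan is first to reformulate the hypothesis $\varphi(\preccurlyeq)=\varphi(\preccurlyeq')$ as the statement that $\preccurlyeq$ and $\preccurlyeq'$ induce the same restriction to each ``star'' $S_x$. By \eqref{eq:ranksFromLinearOrder}, $r_x$ in $\varphi(\preccurlyeq)$ is determined by, and determines, $\preccurlyeq\!|_{S_x}$, so $\varphi(\preccurlyeq)=\varphi(\preccurlyeq')$ exactly when $\preccurlyeq\!|_{S_x}=\preccurlyeq'\!|_{S_x}$ for every $x\in S$. In particular, any pair of elements of $\binom{S}{2}$ whose relative order differs between $\preccurlyeq$ and $\preccurlyeq'$ must be a pair of \emph{disjoint} $2$-subsets of $S$.

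The key combinatorial lemma is the standard fact that whenever two distinct linear orders on a finite set disagree, one can find a pair of elements that are consecutive in one of the orders but reversed in the other: writing $\preccurlyeq$ as $e_1\prec e_2\prec \cdots \prec e_N$, if every consecutive pair $e_i,e_{i+1}$ satisfied $e_i\prec' e_{i+1}$ then transitivity of $\preccurlyeq'$ would force $\preccurlyeq\,=\,\preccurlyeq'$. I will apply this to obtain some $i$ with $e_i=xy$, $e_{i+1}=zw$, $xy\prec zw$ in $\preccurlyeq$ and $zw\prec' xy$ in $\preccurlyeq'$. By the previous paragraph $\{x,y\}$ and $\{z,w\}$ must be disjoint, for otherwise (say $x=z$) both pairs would lie in $S_x$ and their disagreement would contradict $\preccurlyeq\!|_{S_x}=\preccurlyeq'\!|_{S_x}$.

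Finally I would swap this adjacent disjoint pair to produce a new linear order $\preccurlyeq''$. By Proposition \ref{prop:swap} the edge from $\preccurlyeq$ to $\preccurlyeq''$ is white, so $\varphi(\preccurlyeq'')=\varphi(\preccurlyeq)=\varphi(\preccurlyeq')$, and $\preccurlyeq''$ has strictly fewer inversions relative to $\preccurlyeq'$ than $\preccurlyeq$ does. Iterating (or invoking the inductive hypothesis) produces a path of white edges from $\preccurlyeq$ to $\preccurlyeq'$. The only real obstacle is the short verification that the adjacent inversion one produces is forced to involve disjoint pairs; once the star-restriction reformulation is in hand this is essentially a one-line contradiction, so I expect the proof to be quite clean.
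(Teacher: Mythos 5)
Your proof is correct and takes essentially the same approach as the paper: both directions match, and for $(\Rightarrow)$ both you and the authors construct an explicit white path by sorting $\preccurlyeq$ into $\preccurlyeq'$ via adjacent transpositions and arguing that every transposition must involve disjoint pairs. The only difference is cosmetic: you run bubble sort (repeatedly fixing the first adjacent inversion, using the observation that any disagreeing pair must be disjoint because $\varphi(\preccurlyeq)=\varphi(\preccurlyeq')$ forces $\preccurlyeq|_{S_x}=\preccurlyeq'|_{S_x}$ for all $x$), whereas the paper runs selection sort and argues that since each pair is swapped at most once, a nondisjoint swap would be witnessed by some $r_x$; these are the same underlying idea in slightly different packaging, with your star-restriction reformulation arguably making the disjointness claim a bit more transparent.
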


\begin{proof}
Suppose there is such a path of white edges. Proposition \ref{prop:swap} implies that all the associated swaps are disjoint,
and so $\varphi(\preccurlyeq)=\varphi(\preccurlyeq')$.
Conversely, suppose $\varphi(\preccurlyeq)=\varphi(\preccurlyeq')$.
Let $N=\binom{n}{2}$ and consider the ``selection sort'' algorithm to rearrange the list of pairs $L=(A_1 \preccurlyeq A_2 \preccurlyeq \cdots \preccurlyeq A_N)$ into the list $L'=(A'_1 \preccurlyeq' A'_2 \preccurlyeq' \cdots \preccurlyeq' A'_N)$:
\begin{itemize}
    \item For each $i=1,2,\ldots,N$ do:
    \begin{itemize}
        \item Find $A'_i$ in $L$, say at position $j \geq i$
        \item Move $A'_i$ down to position $i$ via $j-i$ swaps involving $A'_i$ and the item just below.
    \end{itemize}
\end{itemize}
We claim there are no ``bad swaps'' (swaps of nondisjoint items). To see this, observe that in the course of the algorithm, for each $i,j$, items $A_i,A_j$ are swapped at most once. Thus if there were a bad swap, say with $A_i \cap A_j = \{x\}$, then we would have $\varphi(\preccurlyeq) \neq \varphi(\preccurlyeq')$ as witnessed by $r_x$.
\end{proof}


\begin{cor}
The subgraph of $G$ consisting of the
white edges, which we call the \textbf{\emph{white graph}}, decomposes into 
components, each of which
consists of the inverse image under $\varphi$ of a distinct CRS.
\end{cor}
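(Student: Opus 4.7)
The plan is to deduce the corollary almost immediately from Proposition \ref{prop:pathofswaps}, which already does the heavy lifting. What remains is to repackage that biconditional as a statement about the connected components of the white subgraph.

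First I would fix notation: let $W$ denote the spanning subgraph of $G$ whose edge set consists precisely of the white edges, and for a CRS $\mathcal{S} \in R_n$ let $\varphi^{-1}(\mathcal{S}) \subseteq L_n$ denote its fiber. The goal is to show that the vertex sets of the connected components of $W$ are exactly the sets $\varphi^{-1}(\mathcal{S})$ as $\mathcal{S}$ ranges over $R_n$.

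Next I would verify the two directions. For any $\mathcal{S} \in R_n$, Proposition \ref{prop:pathofswaps} says that any two linear orders $\preccurlyeq,\preccurlyeq'$ both lying in $\varphi^{-1}(\mathcal{S})$ are joined by a path of white edges, so $\varphi^{-1}(\mathcal{S})$ is contained in a single connected component of $W$. Conversely, if $\preccurlyeq$ and $\preccurlyeq'$ lie in the same component of $W$, then by definition there is a path of white edges joining them, and the other direction of Proposition \ref{prop:pathofswaps} forces $\varphi(\preccurlyeq) = \varphi(\preccurlyeq')$; hence that component is contained in a single fiber $\varphi^{-1}(\mathcal{S})$. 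Combining these two containments, each component of $W$ equals some fiber $\varphi^{-1}(\mathcal{S})$.

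Finally I would note that the fibers $\{\varphi^{-1}(\mathcal{S}) : \mathcal{S} \in R_n\}$ partition $L_n$ (as preimage-fibers of any map do), and that $\varphi$ is surjective (as recalled after \eqref{eq:phimap}), so every CRS genuinely appears, and distinct CRSes index disjoint non-empty components. This gives the claimed decomposition of the white graph. The argument is essentially a rewriting and involves no real obstacle; the only care needed is to make sure one does not inadvertently conflate the vertex sets of the components with the edge sets, and to invoke both directions of Proposition \ref{prop:pathofswaps}, not just the easy forward one.
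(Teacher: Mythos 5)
Your argument is correct and is precisely the intended deduction: the paper states the corollary without proof as an immediate consequence of Proposition \ref{prop:pathofswaps}, and your unpacking of that biconditional into a two-way containment between components of the white graph and fibers of $\varphi$ is exactly what is implicit there. Nothing further is needed.
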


The components of the white graph vary dramatically in size, from as small as 1 to as large as $(n/2)!^{n-1} \approx \exp(n^2\log(n)/2)$ or larger, as the next examples show. To present these examples we adopt the following practical convention. For small $n$, we represent a linear order $\preccurlyeq$ 
as a matrix where rows are indexed by $S$ and each column is the indicator of an element of $\binom{S}{2}$. We agree that 
the columns are arranged in $\preccurlyeq$-increasing order left to right (as in Example \ref{ex:embedding}), and we leave blanks for zeros.

\begin{ex}[CRS with a unique $\varphi$-preimage]
\label{ex:RankingSystemUnique}
We return to the powers of 2 example of Section \ref{sec:powersof2}. Let $S=\{1,2,4,8,16,32\}$ and let $\preccurlyeq$ order the pairs of points of $S$ by their Euclidean distance in $\R$. If row $i$ corresponds to $2^i$, 
and columns are indexed by the vector
\[
\begin{smallmatrix}
 \{1,2\} & \{2,4\} & \{1,4\} & \{4,8\} & \{2,8\} &
  \{1,8\} & \{8,16\} & \{4,16\} & \{2,16\} & \{1,16\} &
   \{16,32\} & \{8,32\} & \{4,32\} & \{2,32\} & \{1,32\}\\
\end{smallmatrix}
\]

then $\preccurlyeq$ is represented by the matrix
\begin{align}
\label{eq:CRSuniquePreimage}
\left[
\begin{array}{*{15}c}
\cdashline{3-3} \cdashline{5-6} \cdashline{8-10} \cdashline{12-15}
1& &\multicolumn{1}{:c:}{1}& &\multicolumn{1}{:c}{ }&\multicolumn{1}{c:}{1}& &\multicolumn{1}{:c}{ }& &\multicolumn{1}{c:}{1}& &\multicolumn{1}{:c}{ }& & &\multicolumn{1}{c:}{1}\\
\cdashline{3-3}
1&1& & &\multicolumn{1}{:c}{1}&\multicolumn{1}{c:}{ }& &\multicolumn{1}{:c}{ }&1&\multicolumn{1}{c:}{ }& &\multicolumn{1}{:c}{ }& &1&\multicolumn{1}{c:}{ }\\ 
\cdashline{5-6}
 &1&1&1& & & &\multicolumn{1}{:c}{1}& &\multicolumn{1}{c:}{ }& &\multicolumn{1}{:c}{ }&1& &\multicolumn{1}{c:}{ }\\
\cdashline{8-10}
 & & &1&1&1&1& & & & &\multicolumn{1}{:c}{1}& & &\multicolumn{1}{c:}{ }\\
\cdashline{12-15}
 & & & & & &1&1&1&1&1& & & & \\ 
 & & & & & & & & & &1&1&1&1&1\\ 
\end{array}
\right].
\end{align}
In view of Proposition \ref{prop:swap}, since every pair of consecutive columns shares a nonzero row, every edge of $G$ incident to $\preccurlyeq$ is black. Thus $\preccurlyeq$ is an isolated point of the white graph. 
\end{ex}

By examining the matrix \eqref{eq:CRSuniquePreimage} we can derive a lower bound on the number of isolated points of the white graph. Indeed, if we replace each dashed box in \eqref{eq:CRSuniquePreimage} by any permutation matrix of the same size, the resulting matrix represents another isolated point. 
This is because it retains from \eqref{eq:CRSuniquePreimage} the property that every pair of consecutive columns shares a nonzero row. Thus the white graph contains at least $\prod_{k=1}^{n-2}k! \approx \exp(n^2\log(n)/2)$ isolated points.

For odd $n$, another source of isolated points in the white graph is Eulerian circuits\footnote{Recall that an \emph{Eulerian circuit} in a graph is a closed walk that traverses each edge exactly once.} in the complete graph on $S$. Indeed, if $C=(e_1,e_2,\ldots,e_{N})$ is any such circuit, with $N:= \binom{n}{2}$, then the linear order $e_1 \preccurlyeq e_2 \preccurlyeq \cdots \preccurlyeq e_N$ is an isolated point. This is because since $C$ is a walk, by definition $e_i \cap e_{i+1} \neq \varnothing$ for each $i$. McKay and Robinson \cite{mck} prove an asymptotic formula for the number of Eulerian circuits on $K_n$ in which the dominant term is $\exp(n^2\log(n)/2)$, roughly matching the number of isolated points arising from powers-of-2-type orders as above.


\begin{ex}[CRS with many $\varphi$-primages]
\label{ex:RankingSystemMany}
For even $n$, the set $\binom{S}{2}$ of edges of 
the complete graph on $S$ may be partitioned into 
disjoint perfect matchings\footnote{
For each $j$, each vertex in $S$ is incident to exactly one edge in $P_j$.
} $(P_1,\ldots,P_{n-1})$,
according to
an elementary form of Baranyai's theorem \cite{van}.
Here $|P_j|=n/2$ for every $j$.

 Let $\preccurlyeq$ be any linear order on $\binom{S}{2}$ satisfying $xy\preccurlyeq zw$ whenever $xy\in P_i$, $zw\in P_j$, and $i<j$. Then $\varphi(\preccurlyeq)$ has at least $(n/2)!^{n-1}$ $\varphi$-preimages, because each $P_i$ can be arbitrarily reordered (relative to $\preccurlyeq$) independently of the others, without changing the image under $\varphi$. The following matrix represents such a linear order $\preccurlyeq$ with $n=6$.
 Rows correspond to the six elements of $S$, and columns to the $\binom{6}{2}$
 edges of the complete graph on $S$.
\[ \left[
\begin{array}{*4{ccc:}ccc}
1& & & & &1& &1& &1& & &1& & \\
1& & &1& & & & &1& &1& & & &1\\
 &1& &1& & &1& & &1& & & &1& \\
 &1& & &1& & &1& & & &1& & &1\\
 & &1& &1& &1& & & &1& &1& & \\
 & &1& & &1& & &1& & &1& &1& 
\end{array}
\right] \]
The dashed vertical lines partition the columns into $n-1$ partitions of $S$, each of size $n/2$. The first three columns represent $P_1$, the next three $P_2$, etc. Notice that in this example even the consecutive columns that cross the boundaries of the partitions share no nonzero rows, 
meaning that every edge incident to $\preccurlyeq$ in the equivalent metrics graph is white.
\end{ex}


We close this section with two easy calculations intended to give some (minimal) sense of the overall structure of the equivalent metrics graph.

\begin{prop}
The probability $p$ that an edge of $G$
selected uniformly at random is white is
\begin{align*}\label{e:blackedge}
      p:=\frac{\binom{n-2}{2} }{\binom{n}{2} - 1} = 
1 - \frac{4}{n+1}.
\end{align*}
\end{prop}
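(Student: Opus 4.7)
The plan is to exploit the $d$-regularity of $G$ --- where $d := \binom{n}{2}-1$, since every linear order has exactly that many pairs of consecutive items --- to reduce the edge-uniform calculation to a vertex-uniform one. In a $d$-regular graph, sampling an edge uniformly at random agrees, up to a uniform over-count of $2$ per edge, with sampling a uniformly random vertex and then a uniformly random incident edge. Since this over-count is the same for every edge, it is harmless for computing the fraction of edges that are white. So $p$ equals the probability that a uniformly random edge incident to a uniformly random vertex of $G$ is white.

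Writing a vertex $\preccurlyeq$ as a list $(A_1,\ldots,A_N)$ with $N := \binom{n}{2}$, its $N-1$ incident edges are indexed by the positions $i \in \{1,\ldots,N-1\}$ at which one swaps two consecutive items; and by Proposition \ref{prop:swap} the edge at position $i$ is white iff $A_i \cap A_{i+1} = \varnothing$. Hence $p$ is the probability, under a uniformly random $\preccurlyeq$ together with an independent uniform $i$, that $A_i$ and $A_{i+1}$ are disjoint.

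For each fixed $i$, the marginal distribution of the ordered pair $(A_i,A_{i+1})$ under uniform $\preccurlyeq$ is uniform over ordered pairs of distinct elements of $\binom{S}{2}$, because each such ordered pair extends to exactly $(N-2)!$ full linear orders. There are $\binom{n}{2}\binom{n-2}{2}$ disjoint ordered pairs out of $\binom{n}{2}(\binom{n}{2}-1)$ in total, so the conditional probability of whiteness at position $i$ equals $\binom{n-2}{2}/(\binom{n}{2}-1)$; as this is independent of $i$, averaging over $i$ leaves it unchanged, and it also equals $p$. The closed form $1-\tfrac{4}{n+1}$ follows from $\binom{n-2}{2} = (n-2)(n-3)/2$ and $\binom{n}{2}-1 = (n^2-n-2)/2 = (n-2)(n+1)/2$, giving $(n-3)/(n+1) = 1 - 4/(n+1)$.

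The only subtlety worth flagging is the initial step where one recognizes that Proposition \ref{prop:swap} converts the edge-color question into a purely combinatorial disjointness question about consecutive entries in a uniformly random linear order on $\binom{S}{2}$; once that reformulation is in place, the remaining calculation is elementary counting, and no asymptotic analysis is required.
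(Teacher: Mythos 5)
Your proposal is correct and follows essentially the same route as the paper: both exploit the $d$-regularity of $G$ to convert the uniform-edge question into a uniform-vertex-then-incident-edge question, then reduce to the disjointness of consecutive items via Proposition~\ref{prop:swap}. You supply the justification that the paper leaves implicit --- namely that for a uniformly random linear order, the marginal law of each ordered pair of consecutive items is uniform over ordered pairs of distinct elements of $\binom{S}{2}$, which makes the paper's phrase ``on average $\binom{n-2}{2}$ of these pairs will be disjoint'' precise.
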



\begin{proof}
Pick a uniformly random edge by (a) picking a uniformly random vertex and then (b) picking a uniformly random edge incident to it. At step (b), out of $d$ possible choices of a pair of consecutive elements $xy,zw$, on average $\binom{n-2}{2}$ of these pairs will be disjoint.
\end{proof}

\begin{prop}[Linear orders per CRS]
\label{l:1stmoment}
Let $H$ count the linear orders on $\binom{S}{2}$ which
lie in the preimage under $\varphi:L_n \to R_n$ of a 
CRS on $S$ chosen uniformly at random. Then
\begin{equation} \label{e:ratio}
\frac{\binom{n}{2}!}{((n-1)!)^n} \;\; < \;\; 
\E[H] = \frac{|L_n|} {|R_n|} \;\; < \;\;
\frac{\binom{n}{2}!}{\prod_{k=1}^{n-2}k!}.
\end{equation}
\end{prop}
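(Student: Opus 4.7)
The plan is to use surjectivity of $\varphi$ to justify the middle equality, then bound $|R_n|$ on both sides using constructions provided in earlier examples. Because $\varphi:L_n\to R_n$ is surjective (every CRS $\mathcal{S}$ has at least one linear extension of $\preccurlyeq_\mathcal{S}$ as a preimage), the preimages partition $L_n$, and so $\sum_{\mathcal{S}\in R_n}|\varphi^{-1}(\mathcal{S})|=|L_n|=\binom{n}{2}!$. Sampling $\mathcal{S}$ uniformly from $R_n$ then gives
\[
\E[H]=\frac{1}{|R_n|}\sum_{\mathcal{S}\in R_n}|\varphi^{-1}(\mathcal{S})|=\frac{|L_n|}{|R_n|},
\]
which is the middle equality. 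The two inequalities in (\ref{e:ratio}) are thereby equivalent to $\prod_{k=1}^{n-2}k!<|R_n|<((n-1)!)^n$.

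For the upper bound, each of the $n$ vertices independently chooses one of $(n-1)!$ rankings, so at most $((n-1)!)^n$ ranking systems exist in total, concordant or not. Strictness holds for $n\geq 3$: I would install the cyclic configuration of Example \ref{NonConcRS} on an arbitrary three-element subset $\{a,b,c\}\subseteq S$, extending the individual rankings to the remaining points arbitrarily (and choosing rankings at the other vertices arbitrarily). Concordance is inherited by subsystems, since the restriction to $\binom{\{a,b,c\}}{2}$ of any partial order on $\binom{S}{2}$ extending $R_\mathcal{S}$ would extend $R_{\mathcal{S}|_{\{a,b,c\}}}$, which is impossible by Example \ref{NonConcRS}. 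Thus the resulting system is non-concordant, yielding $|R_n|<((n-1)!)^n$.

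For the lower bound, Example \ref{ex:RankingSystemUnique} exhibits $\prod_{k=1}^{n-2}k!$ linear orders in $L_n$ that are isolated points of the white graph, obtained from the powers-of-$2$ configuration \eqref{eq:CRSuniquePreimage} by replacing each of the dashed blocks (of sizes $1,2,\ldots,n-2$) with an arbitrary permutation matrix of the same size. By the corollary to Proposition \ref{prop:pathofswaps}, the components of the white graph are exactly the $\varphi$-fibres, so these isolated points correspond to $\prod_{k=1}^{n-2}k!$ distinct CRSes, giving $|R_n|\geq\prod_{k=1}^{n-2}k!$. Strict inequality follows immediately: these isolated orders occupy only $\prod_{k=1}^{n-2}k!$ of the $\binom{n}{2}!$ elements of $L_n$, so any other linear order lies in a different white-graph component and hence maps to a further CRS.

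I do not foresee a genuine obstacle in executing this plan: the main equality is essentially the definition of $\E[H]$ combined with surjectivity of $\varphi$; the upper bound is a one-line counting estimate supplemented by the three-vertex cyclic extension of Example \ref{NonConcRS}; and the lower bound uses the Example \ref{ex:RankingSystemUnique} construction already in place. The only mildly delicate point is arguing strictness on the lower side, but this is immediate once one observes that the isolated points fail to exhaust $L_n$.
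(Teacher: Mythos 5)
Your proof is correct and takes essentially the same approach as the paper: the middle equality via surjectivity of $\varphi$ and the fibre-sum identity, the lower bound via the $\prod_{k=1}^{n-2}k!$ isolated vertices of the white graph arising from the powers-of-2 construction, and the upper bound via counting all (not necessarily concordant) ranking systems. You supply slightly more explicit justifications for strictness — the paper treats the upper strict inequality as immediate and gets the lower one by observing (via Example \ref{ex:RankingSystemMany}) that some CRS has multiple preimages so $|R_n|>|R^1_n|$, whereas you argue via the three-vertex non-concordant extension and via the isolated orders failing to exhaust $L_n$ — but these are the same bounds obtained by the same constructions.
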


\textbf{Remark:} 
Applying Stirling's approximation for the factorials,
the lower bound for $\log{\E[H]}$ has leading term
$n^2 (1 - \log(2)) / 2 \approx 0.15 n^2$,
whereas the upper bound for the log has leading term
$n^2\log(n) /2$.

\begin{proof}
The equality in \eqref{e:ratio} follows from  
\[  
|R_n| \E[H] 
= \sum_{r \in R_n} |\varphi^{-1}(r)| = |L_n|.
\]
For the inequalities, let $\tilde{R}_n \supset R_n$ denote
the set of all ranking systems (possibly non-concordant) on $S$,
and let $R^1_n \subset R_n$ denote the concordant $r \in R_n$
for which $|\varphi^{-1}(r)|= 1$. It was proved above that 
\[
|R_n| > |R^1_n| \geq \prod_{k=1}^{n-2}k!.
\]
It is immediate from the definitions that
\[
|R_n| < |\tilde{R}_n| = ((n-1)!)^n.
\]
Replacing $|R_n|$ in \eqref{e:ratio} by $|R^1_n|$ and $|\tilde{R}_n|$, the bounds follow.
\end{proof}

\section{Second Neighbor Range Query on a Homogeneous Poisson Process} \label{s:knnpp}

\subsection{Second neighbor range query} \label{s:2nrq-describe}

We are disappointed to report that our attempts to
bound the complexity of pure ranking-based NND implementations
described in Section \ref{s:implement} were not successful.
Instead we shall analyze another FOF-based algorithm, called second neighbor range query. 

Suppose NND were performed on a set $S$ whose elements are distributed uniformly 
in a compact metric space $(\mathcal{X}, \rho)$.
Given the index $t$ of the current round, we should be able to predict roughly the distance $r$ from a typical point to its least-preferred friend. The \emph{second neighbor range query algorithm} (2NRQ) harnesses this intuition.
The principle of 2NRQ,
made precise in Section \ref{s:2nrq}, may be summarized as:
\begin{center}
    \textit{Each vertex allows each pair of its
    neighbors to discover if they are 
    close to each other.}
\end{center}

The basic unit of work in 2NRQ is not a friend barter but a \emph{range query}. A range query is similar to one round of friend set updates, with changes to the steps that determine membership in the new friend set. In a range query, we fix $r>0$ and we retain some but not all proposed arcs
$u \to u'$ satisfying $\rho(u, u') \leq r$, according to a certain acceptance sampling policy. We discard arcs where $\rho(u, u') > r$.
The out-degree of of a vertex in the friend digraph is no longer exactly $K$; the idea is to choose $r$ so that the expected out-degree is $K$. A single range query is illustrated in Figure \ref{f:2nf}. 

\begin{figure}
\caption{
\textbf{Second neighbor range query: }
\textit{Left: A random graph $G$ with 12 edges among 13 vertices embedded in $[-1, 1]^2$.} 
\textit{Right: The result of a second neighbor range query operation: distinct vertices
with a common neighbor in the left graph become adjacent if
their separation does not exceed $0.7$. Eight new edges are shown in red.
For visual comprehension, edges of $G$ are shown as dashed.
Here we gloss over the
subtle acceptance sampling of the 2NRQ update discussed in Section
\ref{s:2nrq}.}
}
\label{f:2nf}
\begin{center}
\scalebox{0.36}{\includegraphics{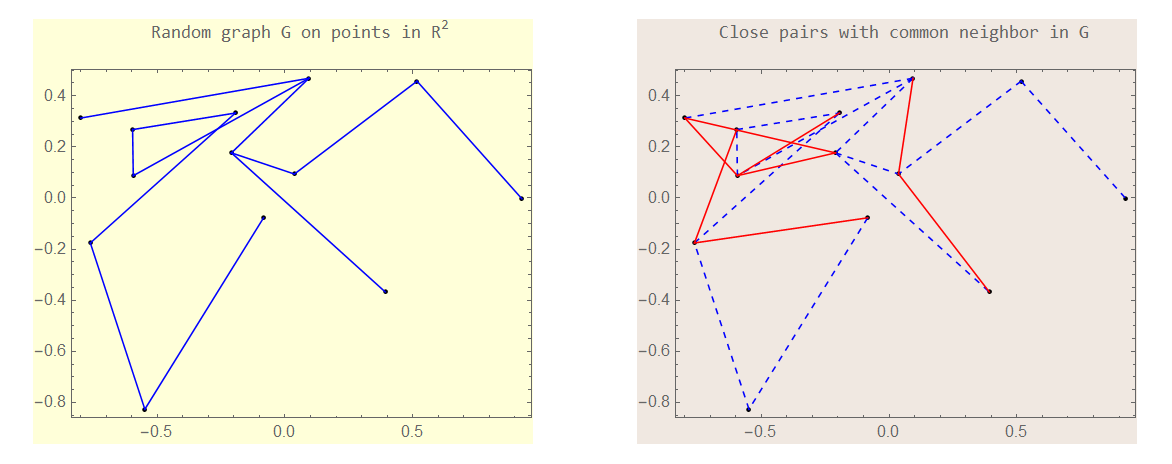}} 
\end{center}
\end{figure}

The 2NRQ algorithm is not proposed
for practical use; it is merely the closest
algorithm to NND
among those whose behavior we can analyze
rigorously, at least for the case where the set $S$
is a realization of a homogeneous Poisson process.
The only case in which explicit parameters for 2NRQ will be derived is
where the metric space is the $d$-dimensional torus
with the $\ell_{\infty}$ norm.

Key differences between 2NRQ and NND are:
\begin{enumerate}
    \item[(a)] 2NRQ requires a metric space, whereas NND does not.
    \item[(b)] 2NRQ determines friends based on distance, whereas NND queries a ranking oracle.
    \item[(c)] 2NRQ allows vertex degrees to vary, whereas in NND they are fixed at $K$.
    \item[(d)] 2NRQ uses undirected edges, whereas NND uses directed arcs.
    \item[(e)] 2NRQ is a theoretical tool for investigating complexity,
    whereas NND is proposed for practical deployment.
\end{enumerate}

\subsection{Compact metric space with invariant measure}\label{s:compactspace}
Take a compact metric space $(\mathcal{X},\rho)$, whose topology
is second countable and Hausdorff\footnote{
Locally compact, second countable Hausdorff spaces are suitable
as a setting for random measures; see Kallenberg \cite[Ch. 10]{kal}.
}, and which supports a positive
finite Borel measure $\lambda$, with the invariance property that
all balls of the same radius have the same measure. 
Without loss of generality, assume that diameter of $\mathcal{X}$ is
$\sup \{\rho(x, y): x, y \in \mathcal{X} \} = 1$.
Denote by $B_r(x)$
the closed ball of radius $r$, centered at $x \in \mathcal{X}$, and denote the ratio of the volume of such a ball to the volume of
the space by
\begin{equation}\label{e:volumeratio}
  h(r):=\frac{\lambda(B_r(x))}{ \lambda(\mathcal{X})}, \quad x \in \mathcal{X}.  
\end{equation}

Take the finite subset $V \subset \mathcal{X}$ to be the support of a Poisson process 
with intensity measure
$n \lambda(\cdot) / \lambda(\mathcal{X})$, as defined in Kallenberg \cite[Ch. 10]{kal}. 
Readers unfamiliar with these terms need only observe that: (1) the size $|V|$
is a Poisson$(n)$ random variable; (2) the number of points of $V$ in any fixed ball of radius $r$ is Poisson$(n h(r))$; and (3) occupancies of disjoint balls are independent.

\subsection{2NRQ graph process}
\subsubsection{Goal}
\label{s:2nrq}
Our goal is to construct a finite sequence $(V, E_t)_{0 \leq t \leq T}$
of undirected random graphs on $V$, with the aim of presenting $(V,E_T)$ 
as an approximate $K$-nearest neighbor
graph. 
In the course of the construction, we will define a
sequence of rates $(\theta_t)$, and a sequence of distances $(r_t)$:
\begin{equation} \label{e:radialparams}
  \frac{K}{n} =  \theta_0 < \theta_1 < \cdots < \theta_T < 1;
\quad
1 = r_0 > r_1 > \cdots > r_T > h^{-1}(K/n),
\end{equation}
coupled together using the identity
\begin{equation}\label{e:samplerate}
    \theta_t = \frac{K}{n h(r_t)}.
\end{equation}
Formulas for $(r_t)$, and a numerical computation, appear in Sections \ref{s:radii-formulas} and \ref{s:explicitradii}. 
The graphs $(E_t)$ will be constructed so as to exhibit
a desired \textbf{sampling property}.
For every $v \in V$:
\begin{center}
    \textit{The neighbors of $v$ under $E_t$ are a random sample
    at rate $\theta_t$ of the elements of}
    \[
    Q_t(v):= (V \setminus \{v\}) \cap B_{r_t}(v).
    \]
\end{center}
In particular, $\rho(v, v') > r_t$ implies $vv' \notin E_t$.

This property holds for $t=0$ by construction:
$E_0$ is obtained by sampling edges of the complete graph on $V$,
independently at rate $K/n$.  

\subsubsection{2NRQ graph update: } \label{s:graphupdate}
We shall now describe 
a recursive stochastic construction of
$E_t$ from $E_{t-1}$, given $r_t$, such that
the sampling property holds.
The idea was presented in Figure \ref{f:2nf}.
A vertex $v''$ of degree two or more in 
$(V, E_{t-1})$ has at least one pair of neighbors. Suppose
$(v, v')$ is such a pair. If $\rho(v, v') > r_t$, do nothing.
If $\rho(v, v') \leq r_t$, then edge $vv'$ is added to $E_t$
according to the success of a Bernoulli$(f_t(v, v'))$ trial,
 where the function $f_t$ is chosen such that the neighbors of $v$
 under $E_t$ are distributed uniformly on $Q_t(v)$, rather than
 biased towards points close to $v$. The formula is as follows.
 
 \begin{defn}
 When edge $vv'$ is proposed for $E_t$, the
 \textbf{acceptance sampling rate} $f_t(v, v')$ is zero if $\rho(v, v') > r_t$, 
 and otherwise is defined as follows.  The volume of the
intersection of the $r$-balls around $v$ and $v'$ is denoted
 \[
 \nu_r(v, v'):=\lambda(B_r(v) \cap B_r(v')); \quad \rho(v, v') \leq r.
 \]
 The minimum value of this volume, over all pairs
 $v, v'$ whose separation does not exceed $s>0$, is
  \begin{equation}\label{e:volumebound}
 g(s, r):= \min_{u, u' \in \mathcal{X}: \rho(u, u') \leq s}\nu_{r}(u, u').
 \end{equation}
 In particular, $g(r_t, r_{t-1})$ is the volume of intersection
 of two $r_{t-1}$-balls around points a distance $r_{t}$ apart, which is
 less than $\nu_{r_{t-1}}(v, v')$ when $\rho(v, v') < r_t$.
 Take
  \begin{equation}\label{e:acceptancerate}
     f_t(v, v'):= 
     \frac{g(r_t, r_{t-1})}
     {\nu_{r_{t-1}}(v, v')} \leq 1.
 \end{equation}
 \end{defn}
 
The function (\ref{e:volumebound}) is strictly positive when $0 < s < 2 r$, by the triangle inequality; We shall compute it explicitly for the torus in (\ref{e:torusbound}).

The purpose of this acceptance sampling computation 
will be apparent in the proof of Proposition \ref{p:validupdate}:
whenever $\rho(v, v') \leq r_t$, the pool of common
neighbors $v''$ of both $v$ and $v'$ in $E_{t-1}$ has a 
Poisson number of elements whose mean is
\begin{equation}\label{e:meancommonnbrs}
\frac{n \theta_{t-1}^2 \nu_{r_{t-1}}(v, v')}{\lambda(\mathcal{X})}
 \end{equation}
 which depends on the distance between $v$ and $v'$;
 however this mean is reduced by acceptance sampling to
\begin{equation}\label{e:commons}
f_t(v, v') \cdot
\frac{n \theta_{t-1}^2 \nu_{r_{t-1}}(v, v')}{\lambda(\mathcal{X})}
= \frac{n \theta_{t-1}^2 g(r_t, r_{t-1})}{\lambda(\mathcal{X})}  
\end{equation}
which is not affected by the actual distance between $v$ and $v'$, if 
$\rho(v, v') \leq r_t$.

It is this non-dependence on distance which gives $E_t$ the desired sampling property.
 After this update we shall compute $\theta_t$ as the mean rate at which
 elements of $Q_t(v)$ occur as neighbors of $v$ under $E_t$.
 
\subsubsection{Value extracted from final graph:} 
 Suppose this procedure can be shown to work in such a way that,
 substituting from (\ref{e:samplerate}),
 \[
 \frac{\theta_t n \lambda(B_{r_t}(v))  }{ \lambda(\mathcal{X}) }
 = K, \quad t = 0, 1, \ldots, T.
 \]
Then for any $v$, the neighbors of $v$ in $E_T$, 
whose expected number is $K$,
form a proportion $\theta_T$ of all the points of $V$ within
distance $r_T$. The key insight is:

\begin{center}
\textit{If $\theta_T \approx 1$, 
the neighbors of $v$ in $E_T$ constitute approximate nearest neighbors of $v$ in $(V, \rho)$.}
\end{center}

Given $K$ and the measure $\lambda$, a successful algorithm means 
designing parameters (\ref{e:radialparams}) so that $\theta_T \approx 1$ and
$T$ is $O(\log{n})$, giving $O(n \log{n})$ total work.
This we will do for the torus example in Theorem \ref{t:torus}.
If $O(n)$ steps were required, for example, the algorithm
would appear to be useless, since $O(n^2)$ total work would be required.


\subsection{Explicit calculations on the $d$-dimensional torus}
\subsubsection{Metric embedding of $V$}
\label{s:torus}
Let $T^d$ denote the $d$ dimensional hypercube $[-1, 1]^d$ in which
opposite faces are glued together to give the $d$-dimensional torus.
The metric $\rho$ on $T^d$ is induced by the $\ell_{\infty}^d$ norm:
\[
\rho((x_1, \ldots, x_d), (y_1, \ldots, y_d)):=
\max_i \{ \min \{|x_i - y_i|, |x_i - y_i \pm 2| \} \}.
\]
Henceforward arithmetic on $\mathbf{x}:=(x_1, \ldots, x_d)$ will be interpreted 
so that, for each $i$, replacing $x_i$ by $x_i \pm 2$
makes no difference.
The compact metric space $(T^d, \rho)$ has diameter $1$. 
A closed ball
centered at $\mathbf{x}$ with radius $r$ is denoted
$B_r(\mathbf{x}) \subset T^d$. When $r < 1$,
\[
B_r(\mathbf{0}):=[-r, r]^d.
\]

Let $\lambda$ denote Lebesgue measure on the Borel sets of $T^d$,
which has total measure $2^d$.
The volume ratio function (\ref{e:volumeratio}) becomes
\[
h(r):=\frac{(2 r)^d}{2^d} = r^d, \quad r \leq 1.
\]
Our vertex set $V$ will be a random subset of $T^d$ given by a realization of a
homogeneous Poisson point process associated with the measure 
$n 2^{-d} \lambda$.
Thus the cardinality $|V|$ is a Poisson($n$) random variable.

\subsubsection{Intersecting balls on the torus}

\begin{figure}
\caption{
\textit{Suppose $0 < s < r$, and $v, v'$ are two points in
$\R^d$ with the $\ell_{\infty}^d$ norm, separated
by distance $s$. The volume of the set of points $v''$
within distance $r$ of both $v$ and $v'$ is at least $(2 r - s)^d$.
The case $d=2$ is illustrated. Here $0 < t \leq s < r$, and $v$ and $v'$ differ by $s$ in one coordinate, and
$t$ in the other. The locus of points $v''$ has area $(2 r - s) (2 r - t)$,
which is at least $(2 r - s)^2$.
} 
} \label{f:2nf-box}
\begin{center}
\scalebox{0.5}{\includegraphics{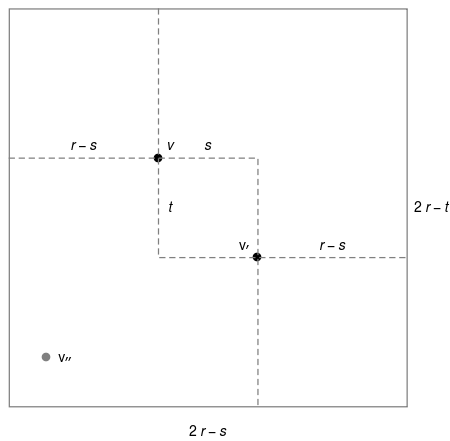}} 
\end{center}
\end{figure}

We shall not prove the following geometric fact, but instead
ask the reader to inspect Figure \ref{f:2nf-box}.

\begin{lem}\label{e:torusballs}
Consider the metric space $T^d$ with the $\ell_{\infty}^d$ norm.
Suppose $0 < s < r < 1$.
 When $\rho(v, v') \leq r$, denote by $\nu_r(v, v')$ the volume of
 the intersection $B_r(v) \cap B_r(v')$, 
 as in Section \ref{s:graphupdate}. Then
  \begin{equation}\label{e:torusbound}
  g(s, r):=
\min_{u, u' : \rho(u, u') \leq s}\nu_{r}(u, u') =
\min \{ 1, (2 r - s)^d \}.
 \end{equation}
\end{lem}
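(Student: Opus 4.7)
The plan is to reduce the torus problem to an elementary product computation in the fundamental domain, then exhibit an explicit minimizer. Since the metric $\rho$ on $T^d$ is the supremum of the one-dimensional metrics on each $T^1$-factor, both balls and volumes factorize coordinate-wise, so the proof splits into three steps.

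First, I would use translation invariance to place $v = \mathbf{0}$, and write $v' = (t_1, \ldots, t_d)$ where each $t_i \in T^1$ satisfies $|t_i| \leq s$ (with $|\cdot|$ denoting the one-dimensional torus distance to $0$). The hypothesis $\rho(v, v') \leq s$ is equivalent to $\max_i |t_i| \leq s$. Because $r < 1$, the arcs $[-r, r]$ and $[t_i - r, t_i + r]$ each have one-dimensional Lebesgue measure exactly $2r$ and do not self-overlap on $T^1$. Fubini's theorem then gives
\[
\nu_r(v, v') \;=\; \prod_{i=1}^d \lambda_1\bigl([-r, r] \cap [t_i - r, t_i + r]\bigr),
\]
where $\lambda_1$ is one-dimensional Lebesgue measure on $T^1$ and the intersections are taken on the circle.

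Second, I would establish the coordinate-wise inequality $\lambda_1\bigl([-r, r] \cap [t_i - r, t_i + r]\bigr) \geq 2r - |t_i|$. If $r + |t_i|/2 \leq 1$, the two arcs sit inside the fundamental interval $[-1, 1]$ without wrapping, so they meet in a single interval of length exactly $2r - |t_i|$ (nonnegative since $|t_i| \leq s < r$). If $r$ is large enough that wrap-around occurs, the wrapped pieces contribute additional overlap, so this quantity only grows. Multiplying across coordinates and using $|t_i| \leq s$ yields
\[
\nu_r(v, v') \;\geq\; \prod_{i=1}^d (2r - |t_i|) \;\geq\; (2r - s)^d.
\]

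Third, I would attain this bound with the explicit configuration $v = \mathbf{0}$, $v' = (s, s, \ldots, s)$, for which $\rho(v, v') = s$ and, provided the points are placed far enough from the identified faces (possible after a translation whenever $r + s \leq 1$), each coordinate intersection has length exactly $2r - s$, giving $\nu_r(v, v') = (2r - s)^d$. The cap in the stated formula is the trivial upper bound $\nu_r \leq \lambda(T^d)$ and is vacuous under the hypothesis $s < r < 1$, since then $2r - s < 2$. The two-dimensional picture in Figure \ref{f:2nf-box} already exhibits the essential geometry.

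The main obstacle I anticipate is the wrap-around regime (say, $r$ close to $1$), where two balls can meet in several components and the pictorial argument is less transparent. The point is that wrap contributions only enlarge the overlap, so the Euclidean lower bound $(2r - s)^d$ persists; the explicit minimizer above attains it whenever the naive placement fits in a single fundamental domain, which by translation invariance is always achievable in the regime where $(2r - s)^d$ is the binding quantity.
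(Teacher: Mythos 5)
The paper does not actually prove this lemma; it explicitly defers to Figure~\ref{f:2nf-box}, whose caption asserts only the one-sided inequality $\nu_r(v,v') \geq (2r-s)^d$, and in Euclidean space rather than on the torus. Your Fubini reduction to one-dimensional arc intersections is the right framework, and your lower bound $\nu_r(v,v') \geq (2r-s)^d$ is sound. But there are two real gaps in the upgrade to equality.

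First, your attainment step silently fails in the wrap-around regime. On the circle $T^1$ of circumference $2$, the overlap of two arcs of length $2r$ with centers a distance $t$ apart is $\max\{2r - t,\ 4r - 2\}$, not $2r - t$: once $t > 2 - 2r$ the two arcs together cover the whole circle and also intersect on the ``far side,'' so the overlap floors at $4r - 2$ rather than continuing to decrease. Taking $t_i = s$ in every coordinate, the minimum is therefore
\[
\min_{u,u':\ \rho(u,u')\leq s}\nu_{r}(u, u') \;=\; \bigl(\max\{2r - s,\ 4r - 2\}\bigr)^d,
\]
which equals $(2r-s)^d$ precisely when $r \leq 1 - s/2$, and equals $(4r-2)^d > (2r-s)^d$ otherwise. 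Your explicit configuration $v = \mathbf{0}$, $v' = (s,\ldots,s)$ achieves $(2r-s)^d$ only under your stronger hypothesis $r + s \leq 1$; no translation rescues it when $r > 1 - s/2$, because there the lower bound $(2r-s)^d$ is simply not tight. (You also leave the window $1-s < r \leq 1-s/2$ unaddressed, though the formula happens to hold there.)

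Second, your dismissal of the $\min\{1,\cdot\}$ cap as the trivial volume bound is not right: the trivial bound would be $\lambda(T^d) = 2^d$, not $1$, and the cap is \emph{not} vacuous in the paper's use of the lemma --- the split between update formulas (\ref{e:torusradii-alt}) and (\ref{e:torusradii}) in Section~\ref{s:radii-formulas} is exactly the case distinction $2r_{t-1} - r_t \geq 1$ versus $< 1$, with $g = 1$ plugged in when the cap bites. In fact the computation above shows that in that regime the true minimum exceeds $1$ (it can be as large as $2^d$, e.g.\ when $r \to 1$), so the stated equality \eqref{e:torusbound} does not appear to hold there. What the downstream Proposition~\ref{p:validupdate} actually requires is only that the declared $g$ be a valid \emph{lower} bound on $\nu_{r_{t-1}}$, so that the acceptance rate $f_t = g/\nu_{r_{t-1}} \leq 1$; both your $(2r-s)^d$ and the paper's $\min\{1,(2r-s)^d\}$ satisfy that. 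But if your goal is to prove the lemma as stated, you must either restrict to $r \leq 1 - s/2$ or incorporate the $4r - 2$ correction into the formula.
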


\textbf{Remark: }For another metric, the formula would change.
For example, in the Euclidean metric on $\R^3$, integral calculus
gives
\[
\frac{\pi}{12}(4 r + s) (2 r - s)^2.
\]

\subsection{General update formula for 2NRQ parameters}
We shall now establish the validity of the inductive step
on which 2NRQ depends, and declare the update formulas
for the parameters (\ref{e:radialparams}).
\begin {prop}[2NRQ parameter update]
\label{p:validupdate}
Suppose $V$ is a homogeneous Poisson process, with mean
cardinality $n$, on a compact metric space
$(\mathcal{X},\rho)$ as in
Section \ref{s:compactspace}.
Suppose that, for some $t \geq 1$, and for each $v \in V$.
the neighbors of $v$ under $E_{t-1}$ are a uniform random sample
at rate $\theta_{t-1}$ 
of the elements of $V \setminus \{v\}$ in the ball of radius $r_{t-1}$ centered at $v$,
where $\theta_{t-1}$ and $r_{t-1}$ are parameters connected by
(\ref{e:samplerate}). If there exists $r_t < r_{t-1}$
satisfying the formula:
\begin{equation}\label{e:distanceupdate}
\left(\frac{K}{n h(r_{t-1})}
\right)^2  \frac{n g(r_t, r_{t-1})}{\lambda(\mathcal{X})} = - \log\left(
1 - \frac{K}{n h(r_{t})}
\right),
\end{equation}
use this new radius $r_t$ to
construct $E_t$ according to the 2NRQ update, with acceptance sampling
as described in (\ref{e:acceptancerate}). In that case,
for each $v \in V$,
the neighbors of $v$ under $E_{t}$ are a random sample
at rate $\theta_{t}:=K/(n h(r_{t}))$ 
of the elements of $V \setminus \{v\}$ in the ball of radius $r_{t}$ centered at $v$. 
\end{prop}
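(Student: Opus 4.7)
My plan is to verify, by direct computation using Poisson thinning, that for each distinct pair $v, v' \in V$ with $\rho(v,v') \le r_t$, the probability that $vv' \in E_t$ is exactly $\theta_t$, independent of the actual distance $\rho(v,v')$. Pairs with $\rho(v,v') > r_t$ are excluded by the range-query step, so this yields the desired sampling property immediately: conditionally on the positions of $V$, the neighbors of a fixed $v$ in $E_t$ form a Bernoulli-sampled subset of $Q_t(v)$ at rate $\theta_t$.

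First I would fix distinct $v, v' \in V$ with $\rho(v,v') \le r_t$ and condition on the full realization of $V$. A witness $v''$ must lie in the lens $B_{r_{t-1}}(v) \cap B_{r_{t-1}}(v')$; by the Poisson restriction property, the points of $V \setminus \{v,v'\}$ in this lens form a Poisson process of intensity $n/\lambda(\mathcal{X})$, with total count Poisson of mean $n \nu_{r_{t-1}}(v,v')/\lambda(\mathcal{X})$. By the inductive sampling property at time $t-1$, each such $v''$ becomes a common $E_{t-1}$-neighbor of both $v$ and $v'$ independently with probability $\theta_{t-1}^2$. Poisson thinning makes the number $N$ of witnesses Poisson with mean $\mu := \theta_{t-1}^2 n \nu_{r_{t-1}}(v,v')/\lambda(\mathcal{X})$, reproducing (\ref{e:meancommonnbrs}).

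Next, the 2NRQ update proposes edge $vv'$ once per witness and accepts each proposal independently with probability $f_t(v,v') = g(r_t,r_{t-1})/\nu_{r_{t-1}}(v,v') \in (0,1]$ (the upper bound following from the definition of $g$). A second round of Poisson thinning gives that the number of accepted proposals is Poisson with mean
\[
\mu \, f_t(v,v') \;=\; \frac{\theta_{t-1}^2 \, n \, g(r_t, r_{t-1})}{\lambda(\mathcal{X})},
\]
which depends only on $r_{t-1}, r_t$ --- this cancellation is precisely the point of the acceptance sampling, as displayed in (\ref{e:commons}). Hence $\Pr[vv' \in E_t] = 1 - \exp(-\theta_{t-1}^2 n g(r_t,r_{t-1})/\lambda(\mathcal{X}))$, and the update formula (\ref{e:distanceupdate}), after substituting $\theta_{t-1} = K/(n h(r_{t-1}))$ and $\theta_t = K/(n h(r_t))$, rearranges to $\theta_{t-1}^2 n g(r_t,r_{t-1})/\lambda(\mathcal{X}) = -\log(1-\theta_t)$, so this probability equals $\theta_t$ exactly, independent of $\rho(v,v')$.

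The main obstacle is cleanly articulating the independence assumptions hidden in the inductive hypothesis. The statement "neighbors of $v$ are a uniform random sample at rate $\theta_{t-1}$" is a marginal statement about one vertex, but the computation above requires joint independence: the events $\{vv'' \in E_{t-1}\}$ and $\{v'v'' \in E_{t-1}\}$ must be independent Bernoulli$(\theta_{t-1})$ variables across distinct witnesses, and the acceptance-sampling trials attached to distinct witnesses must be mutually independent. I would strengthen the induction hypothesis to record this joint Bernoulli structure (which is manifestly true for $E_0$ by construction) and propagate it through the update. Once this bookkeeping is made explicit, the Poisson-thinning calculation above goes through verbatim and completes the inductive step.
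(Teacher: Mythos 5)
Your argument matches the paper's proof step for step: fix a pair $v, v'$ within distance $r_t$, thin the Poisson lens count first by $\theta_{t-1}^2$ (common-neighbor status in $E_{t-1}$) and then by the acceptance rate $f_t$, observe that $\nu_{r_{t-1}}(v,v')$ cancels so the retained count is Poisson with mean $\mu$ independent of $\rho(v,v')$, and rearrange \eqref{e:distanceupdate} to conclude $\Pr[vv'\in E_t] = 1 - e^{-\mu} = \theta_t$. Your closing paragraph puts its finger on the one place where the paper is informal, namely that the stated inductive hypothesis is a marginal property of each vertex's neighbor set, whereas the thinning computation implicitly needs joint independence of $\{vv''\in E_{t-1}\}$ and $\{v'v''\in E_{t-1}\}$ over the witnesses $v''$; the paper addresses this only by declaring that it will ``not condition on any knowledge about the positions of points in $V$ which might have been gathered in previous 2NRQ steps.'' Your instinct to strengthen the induction hypothesis to a joint Bernoulli structure is the right one, but you should note that even the strengthened hypothesis is not obviously self-propagating: distinct proposed edges $vv'$ and $vv''$ incident to the same $v$ share witnesses, so the round-$t$ edge indicators are coupled in a way that neither your sketch nor the paper's proof untangles.
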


\textbf{Remark: } We do not assert that for every $K, n$,
and for every metric space, the equation (\ref{e:distanceupdate}) is solvable for $r_t$.
See Theorem \ref{t:torus}.

\begin{proof}
Assume that the random graph $(V, E_{t-1})$ has the properties described. In our probability calculations, we do not 
condition on any knowledge about the positions of points in
$V$ which might have been gathered in previous 2NRQ steps.
Let $r_t$ be as in (\ref{e:distanceupdate}).
Suppose $v, v'$ are a pair of vertices with separation
$\rho(v, v') \leq r_t$. By the uniformity hypothesis,
the number of vertices
\[
v'' \in B_{r_{t-1}}(v) \cap B_{r_{t-1}}(v')
\]
is a Poisson random variable with mean
\[
\frac{n \nu_{r_{t-1}}(v, v')}{\lambda(\mathcal{X})}.
\]
Any such vertex $v''$ has a probability of 
\[
\theta_{t-1}^2 = \left(\frac{K}{n h(r_{t-1})}\right)^2
\]
of being adjacent to both $v$ and $v'$ under $E_{t-1}$.
Thus the number of common neighbors of $v$ and $v'$
under $E_{t-1}$ is Poisson with the mean given by (\ref{e:meancommonnbrs}), which is reduced by acceptance
sampling to (\ref{e:commons}), which we shall abbreviate to $\mu$. Note that the formula
(\ref{e:commons}) does not depend on the value of
$\rho(v, v')$, provided it is less than $r_t$.
This is the reason why the neighbors of $v$ under $E_t$
are uniformly distributed in the ball of
radius $r_t$ about $v$. 

The probability that no edge between $v$ and $v'$ exists
in $E_t$ is $e^{-\mu}$, which is the mass at zero of a 
Poisson$(\mu)$ random variable. Hence the new rate is
\[
\theta_t:=\frac{K}{n h(r_{t})} = 1 - e^{-\mu}; \quad
\mu = \frac{n \theta_{t-1}^2 g(r_t, r_{t-1})}{\lambda(\mathcal{X})}.
\]
Rewrite as $\mu = - \log{(1 - \theta_t)}$. After the substitution
(\ref{e:samplerate}), this
is exactly the formula (\ref{e:distanceupdate}).

Thus $E_t$ has the desired properties, and the relationship
(\ref{e:samplerate}) between $\theta_t$ and $r_t$ has been
maintained.
\end{proof}

\subsection{2NRQ for uniform points on the torus completes in $O(\log{n})$ steps}
Here is our positive result for 2NRQ on the $d$-dimensional torus.
We believe that, with more effort,  similar results could be obtained for the Euclidean metric
on a bounded subset of $\R^d$, and for geodesic distance on a sphere.

\subsubsection{Formulae for radii:  }\label{s:radii-formulas}
Parameter computation for 2NRQ amounts to solving (\ref{e:distanceupdate}) for $r_t$ in terms of $r_{t-1}$,
for as many steps $t$ as desired, to obtain the full parameter
sequence (\ref{e:radialparams}), where $h(r)=r^d$ on the torus.
Lemma \ref{e:torusballs} shows that, for the torus, 
whenever $2 r_{t-1} - r_t < 1$,
(\ref{e:distanceupdate}) takes the simpler form
\begin{equation}\label{e:torusradii}
  \left(\frac{K}{n r_{t-1}^d}
\right)^2 \frac{n (2 r_{t-1} - r_t)^d}{2^d} = - \log\left(
1 - \frac{K}{n r_{t}^d}
\right).  
\end{equation}
When $2 r_{t-1} - r_t \geq 1$, as occurs when $t=1$, (\ref{e:distanceupdate}) 
specifies $r_t$ explicitly in terms of $r_{t-1}$ via the formula
\begin{equation}\label{e:torusradii-alt}
 - \log\left(1 - \frac{K}{n r_{t}^d}\right) = 
  \left(\frac{K}{n r_{t-1}^d}\right)^2 \frac{n}{2^d}.   
\end{equation}
The proof of Theorem \ref{t:torus} shows that there exists $t' \geq 1$ such that 
(\ref{e:torusradii-alt}) applies when $t \leq t'$, while
(\ref{e:torusradii}) applies when $t > t'$.

\subsubsection{An explicit computation of radii for 2NRQ: } \label{s:explicitradii}
For example when $n = 10^7$, $K=28$, and $d=4$, the criterion
$2 r_{t-1} - r_t \geq 1$ holds only for $t \leq 2$,
giving $r_1 = 0.8694$, $r_2 = 0.6572$. 
For $t \geq 3$, the formula (\ref{e:torusradii}) applies.
Radii $(r_t)$ are shown in the following table, together
with the corresponding values of $(\theta_t)$ (when the latter exceed $10^{-4}$):
\begin{center}
\begin{tabular}{c|c|c|c|c|c|c|c|c|c|}
$t$ & 0 & 1 & 2 & 3 & 4 & 5 & 6 & 7 & 8 \\ \hline
$r_t$ & 1.0 & 0.869 & 0.657 & 0.420 & 0.268 &  0.171 &  0.120 & 0.072& 0.052 \\ \hline
$\theta_t$ & - & - & - & - &  .0005 &  .0032&  .0191 &  .1040 & .3856 \\ \hline
\end{tabular}
\end{center}
In summary, for $n = 10^7$, $K=28$, and $d=4$, eight rounds of
2NRQ lead to a graph $(V, E_8)$ where the neighbors of
an arbitrary vertex $v$ constitute about $39\%$ of the elements
of $V$ within distance $0.052$ of $v$. No ninth round is shown, because
the average vertex degree in $(V, E_9)$ would drop below $K$.

\subsubsection{Parameter estimates: }
To prove Theorem \ref{t:torus}, we shall bound the
radii $(r_t)_{t \geq t'}$ between two geometric progressions,
which allows us to show that the number of steps of 2NRQ is logarithmic in $n$.
For this we prepare some parameter estimates.

The mean number $K$ of neighbors must satisfy a lower bound $K > 2^d$
in terms of the dimension $d$. This allows us to set the
main \emph{scaling parameter}
\begin{equation}\label{e:scaleparam}
  \gamma:=1 - \sqrt{1 - \frac{2} {K^{1/d}}} \in (0,1).  
\end{equation}
Indeed since $K > 2^d$, choose $\beta > 1$ such that $K/\beta > 2^d$, and
define $\gamma_* \in (\gamma, 1)$ via
\begin{equation}\label{e:scaleparam*}
\gamma_*:=
1 - \sqrt{1 - 2 (\beta/K)^{1/d}} \in (0,1).  
\end{equation}
The roles of $\gamma$ and $\gamma_*$ will be that
\(
\gamma < r_t/r_{t-1} \leq \gamma_*
\)
when $t \geq t'$. The proof will mandate that the success rate $\theta_t:=K/(n r_t^d)$ does not
exceed $\alpha \in (0,1)$, implicitly defined by the formula:
\begin{equation}\label{e:alpha}
    \beta:= - \alpha^{-1} \log{(1 - \alpha)}.
\end{equation}
The number $\tau$ of rounds of 2NRQ is a function of the decreasing sequence $(r_t)$:
\begin{equation}\label{e:runtime}
\tau = \max \{t: r_t^d \geq \frac{K}{n \alpha}\}.
\end{equation}
Suppose $\tau > t'$, as will happen for large $n$. Since $\gamma < r_t/r_{t-1} \leq \gamma_*$,
the success rates in rounds $t > t'$ satisfy:
\begin{equation}\label{e:successbounds}
   \gamma_*^{-d} \leq \frac{\theta_t}{\theta_{t-1}} < \gamma^{-d}. 
\end{equation}
It follows that $\alpha \gamma^{d}$ is a lower bound on the success rate,
because if $\theta_{t-1} < \alpha \gamma^{d}$, then another round of 2NRQ would still be permissible.
Here, for example are parameters corresponding to the numerical example of Section 
\ref{s:explicitradii}:
\begin{center}
\begin{tabular}{|c|c|c|c|c|c|c|c|c|}
$n$ & $K$ & $d$ & $\beta$ & $\gamma$ & $\gamma_*$ &  $\alpha$ & $\tau$ & $\alpha \gamma^{d}$  \\ \hline
$10^7$ & 28 & 4 & 1.386 & 0.6387 & 0.7621 & 0.5 & 8 & 0.083 \\
\end{tabular}
\end{center}
The actual final value of the success rate was $0.3856$, which lies in the interval
$(\alpha \gamma^{d}, \alpha]$. Doubling the value of $K$ would allow $\beta = 2.56$,
and a higher success rate of $\alpha= 0.9$.

\subsubsection{Technical result on 2NRQ}
\begin{thm}[Homogeneous Poisson process on torus in $d$ dimensions]
\label{t:torus}
Consider a homogeneous Poisson process associated with the measure
$n 2^{-d} \lambda$ on the 
$d$-dimensional torus, as described in Section \ref{s:torus}.
Assume $K >2^d$, choose $\beta \in (1, K/2^d)$, and
set parameters $0 < \gamma < \gamma_* < 1$ and $\alpha$ according to 
(\ref{e:scaleparam}), (\ref{e:scaleparam*}) and (\ref{e:alpha}). 

\begin{enumerate}
    \item[(a)]
The update formulas (\ref{e:torusradii}) and (\ref{e:torusradii-alt})
have a unique solution for $r_t$, in terms of $r_{t-1}$, starting at $r_0 = 1$, so long as
\(
r_t^d \geq K /(n \alpha),
\)
which allows the 2NRQ algorithm to run for $\tau$ steps as in (\ref{e:runtime}).    
        \item[(b)]
There is some $t'$, depending on $K, \beta$, and $d$, such that
the radii $(r_t)_{t > t'}$ are bounded above and below by geometric sequences:
\(
\gamma < r_t/r_{t-1} \leq \gamma_*.
\)
            \item[(c)]
Suppose $n > K/(\alpha 2^d)$, which implies $\tau >t'$. For each $v \in V$,
the neighbors of $v$ under $E_{\tau}$ are a random sample
at rate $\alpha \gamma^d$ or more
of the elements of $V \setminus \{v\}$ in the ball of radius 
$r_{\tau}$ centered at $v$. The mean number of such neighbors is $K$.
\end{enumerate}

\end{thm}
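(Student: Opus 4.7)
The plan is to analyze the torus recursion (\ref{e:torusradii})--(\ref{e:torusradii-alt}) via the substitution $\rho_t := r_t/r_{t-1}$, using elementary bounds on $-\log(1-x)$ to sandwich $\rho_t$ between $\gamma$ and $\gamma_*$, and then to combine the resulting geometric bounds with Proposition \ref{p:validupdate} to extract the sampling claim at time $\tau$.

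For part (a), substituting $\rho := r_t/r_{t-1}$ into (\ref{e:torusradii}) reduces it to the one-parameter equation
\[
\theta_{t-1}\,\frac{K}{2^d}\,(2-\rho)^d \;=\; -\log\!\bigl(1 - \theta_{t-1}/\rho^d\bigr), \qquad \rho\in\bigl(\theta_{t-1}^{1/d},\,1\bigr),
\]
with $\theta_{t-1} = K/(n r_{t-1}^d)$. The right side diverges as $\rho\downarrow\theta_{t-1}^{1/d}$ while the left stays bounded; at $\rho=1$ the left equals $\theta_{t-1}K/2^d$ and the right equals $-\log(1-\theta_{t-1})$, and the former strictly exceeds the latter because $\theta_{t-1}\le\alpha$ and $-\log(1-x)/x$ is increasing with value $\beta<K/2^d$ at $x=\alpha$. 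Intermediate value theorem gives existence; uniqueness follows by showing the right-to-left ratio is strictly monotone in $\rho$, via direct derivative comparison or convexity of the logarithmic term. An analogous (in fact simpler) monotonicity argument handles (\ref{e:torusradii-alt}) in the first few rounds where $2r_{t-1}-r_t\ge 1$. The algorithm halts per (\ref{e:runtime}) precisely when no admissible $r_t$ with $\theta_t\le\alpha$ exists.

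For part (b), the inequality $-\log(1-x)\ge x$ applied to the reduced equation gives $\theta_{t-1}(K/2^d)(2-\rho)^d \ge \theta_{t-1}/\rho^d$, equivalently $K^{1/d}\rho(2-\rho)\ge 2$, which rearranges to $\rho\ge 1-\sqrt{1-2K^{-1/d}}=\gamma$. For the upper bound, convexity of $x\mapsto -\log(1-x)$ together with $\beta=-\alpha^{-1}\log(1-\alpha)$ yields $-\log(1-x)\le\beta x$ for $x\in(0,\alpha]$; applying this at $x=\theta_t=\theta_{t-1}/\rho^d$ gives $(K/\beta)^{1/d}\rho(2-\rho)\le 2$, hence $\rho\le\gamma_*$. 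These bounds hold exactly when (\ref{e:torusradii}) is in force. A direct check on (\ref{e:torusradii-alt}) shows that $r_t$ decays rapidly (in fact superexponentially) while that formula is active; hence after finitely many rounds the constraint $2r_{t-1}-r_t<1$ holds, which defines $t'$.

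For part (c), once $t>t'$ the bounds of part (b) give $\theta_t/\theta_{t-1}\in[\gamma_*^{-d},\gamma^{-d})$, as recorded in (\ref{e:successbounds}). If $\theta_\tau\le\alpha\gamma^d$, then $\theta_{\tau+1}<\gamma^{-d}\theta_\tau\le\alpha$ would contradict maximality of $\tau$ in (\ref{e:runtime}); hence $\theta_\tau>\alpha\gamma^d$. The hypothesis $n>K/(\alpha 2^d)$ unpacks to $\theta_0=K/n<\alpha 2^d$, and with the controlled growth this guarantees that many rounds beyond $t'$ remain before the budget $\theta_t\le\alpha$ is spent, so $\tau>t'$. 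The sampling property at rate $\ge\alpha\gamma^d$ follows from Proposition \ref{p:validupdate}, and the expected-degree claim $\theta_\tau\,n\,h(r_\tau)=K$ is built into the coupling (\ref{e:samplerate}). The main technical obstacle is the uniqueness step in part (a), since both sides of the reduced equation are monotone in the same direction in $\rho$; I expect to resolve this by comparing derivatives or log-concavity, with care near the endpoints. A secondary bookkeeping task is pinning down $t'$ by iterating (\ref{e:torusradii-alt}) until $2r_{t-1}-r_t<1$ first holds, after which parts (b) and (c) run cleanly.
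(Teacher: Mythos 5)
Your approach mirrors the paper's: substitute $\rho := r_t/r_{t-1}$, apply the two-sided bound $x < -\log(1-x) \le \beta x$ valid on $(0,\alpha]$ (from convexity and the definition (\ref{e:alpha})), and complete the square to trap $\rho$ in $(\gamma,\gamma_*]$. Your IVT check at the endpoints $\rho \downarrow \theta_{t-1}^{1/d}$ and $\rho = 1$ for existence is a worthwhile addition; the paper asserts existence without it. Two points need tightening, and one needs fixing.

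On uniqueness, which you correctly flag as the crux: the paper is also terse here (its chain of inequalities only brackets every solution inside $(\gamma,\gamma_*]$), but the fix is cleaner than ``comparing derivatives.'' Set $u := \theta_{t-1}/\rho^d = \theta_t$ and observe that (\ref{e:torusradii}) rewrites as
\begin{equation*}
K\rho^d(1-\rho/2)^d \;=\; \frac{-\log(1-u)}{u}.
\end{equation*}
The left side is strictly increasing in $\rho$ on $(0,1)$; on the right, $u$ is strictly decreasing in $\rho$ and $u \mapsto -\log(1-u)/u$ is strictly increasing on $(0,1)$, so the right side is strictly decreasing in $\rho$. Hence at most one crossing, and combined with your endpoint signs, exactly one. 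No derivative computation needed. Also, to get the strict bound $\gamma < \rho$ claimed in part (b), use the strict inequality $-\log(1-x) > x$ for $x > 0$, not $\ge$.

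On $t'$: the statement requires $t'$ to depend only on $K$, $\beta$, $d$ and not on $n$. Saying $r_t$ ``decays superexponentially'' while (\ref{e:torusradii-alt}) is active does not by itself deliver an $n$-free bound, because $\theta_{t-1} = K/(n r_{t-1}^d)$ carries an $n$. The paper closes this by plugging the same $x < -\log(1-x) \le \beta x$ bounds into (\ref{e:torusradii-alt}) to get $r_{t-1}\phi < r_t/r_{t-1} \le r_{t-1}\phi_*$ with $\phi := 2K^{-1/d}$, $\phi_* := 2(\beta/K)^{1/d} < 1$, then inducting from $r_0 = 1$ to obtain $\phi^{2^t - 1} < r_t \le \phi_*^{2^t - 1}$, so $r_t$ drops below $1/2$ within $t' < \log_2\!\left(\log_2(K/\beta)/(\log_2(K/\beta) - d)\right)$ rounds. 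You should carry out this induction; your part (c) argument is otherwise fine and matches the paper's.
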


\textbf{Remark: } Corollary \ref{c:2nfwork} will use the
sub-geometric decay of the radii $(r_t)$ to prove that, as $n$ increases,
$O(\log{n})$ steps of 2NRQ suffice according to (\ref{e:runtime}).

\begin{proof}
On the torus with $\ell_{\infty}^d$ norm, we saw that $h(r) = r^d$.
Lemma \ref{e:torusballs} shows that
\[
g(r_t, r_{t-1}) = \min \{1, (2 r_{t-1} - r_t)^d \}
\]
which implies that distance
update formula (\ref{e:distanceupdate}) takes the form 
(\ref{e:torusradii}) or (\ref{e:torusradii-alt}) on the torus.
To be precise, if $r_{t-1} > 1/2$, first try to evaluate $r_t$ using (\ref{e:torusradii-alt}).
If the result satisfies $2 r_{t-1} - r_t \geq 1$, use this unique value of $r_t$.
If not, we may assume $r_{t-1}< (1 + r_t)/2$, and so we shall use (\ref{e:torusradii}) instead. Since $(r_t)$ is decreasing,
there is some $t' \geq 1$, bounded in (\ref{e:boundt'}) below, such that
the formula (\ref{e:torusradii-alt}) applies for $1 \leq t \leq t'$,
and formula (\ref{e:torusradii}) applies for $t > t'$.

It might happen that $\theta_{t'}:=K/(n r_{t'}^d) > \alpha$,
in which case we can stop. Otherwise we must check the
existence and
uniqueness of the solution of (\ref{e:torusradii}) for $r_t$, in terms of $r_{t-1}< (1 + r_t)/2$, when $t > t'$. This will permit 
application of Proposition \ref{p:validupdate},
which will ensure validity of the 2NRQ algorithm.
In either case, we shall verify geometric decay of the radii.

Recall that $\beta:= - \alpha^{-1} \log{(1 - \alpha)} > 1$. 
Convexity of $x\mapsto -\log{(1-x)}$ guarantees that
\[
x < -\log{(1-x)} \leq \beta x, \quad x \in (0, \alpha].
\]
It follows that, so long as $t > t'$ and $\theta_t:=K/(n r_t^d) \leq \alpha$,
\begin{equation}\label{e:radialbounds}
    \frac{K}{n r_t^d} < 
- \log\left(
1 - \frac{K}{n r_t^d}
\right) =
\frac{K^2}{n r_{t-1}^d} \left(
1- \frac{r_t}{2 r_{t-1}}
\right)^d
\leq \frac{\beta K}{n r_t^d}.
\end{equation}
Cancel terms in the inequalities, and abbreviate $r_t/r_{t-1} < 1$
to $y$, to obtain:
\[
1 < K y^d (1 - y/2)^d \leq \beta.
\]
Divide by $K$, take the $1/d$ power, subtract $1/2$, and
complete the square:
\[
\frac{1}{K^{1/d}} - \frac{1}{2} < - \frac{1}{2} (1 - y)^2
\leq \left(\frac{\beta}{K}\right)^{1/d} - \frac{1}{2}.
\]
Double, change signs, and take the positive square root:
\[
\sqrt{1 - 2 (\beta/K)^{1/d}} \leq 1 - y < 
\sqrt{1 -  2 K^{-1/d} }.
\]
In other words, there is a unique solution $y:=r_t/r_{t-1}<1$
to (\ref{e:distanceupdate}), and this solution satisfies:
\[
\gamma < \frac{r_t}{r_{t-1}} \leq \gamma_*, \quad t \geq t'.
\]
As for the case $1 \leq t \leq t'$, the pair of inequalities
(\ref{e:radialbounds}) takes the simpler form:
\[
    \frac{K}{n r_t^d} < 
- \log\left(
1 - \frac{K}{n r_t^d}
\right) =
\frac{K^2}{n 2^d r_{t-1}^{2 d}} 
\leq \frac{\beta K}{n r_t^d}.
\]
This shows that the ratios $(r_t/r_{t-1})_{1 \leq t \leq t'}$ satisfy an
iteration with the bounds:
\[
r_{t-1} \phi < \frac{r_t}{r_{t-1}}  \leq
r_{t-1} \phi_*,
\quad t = 1, 2, \ldots, t'; \quad 
 \frac{2}{K^{1/d}}=:\phi < \phi_*:= 2 (\beta/K)^{1/d} < 1.
\]
An induction, started at $r_0=1$, shows that
\[
\phi^{2^t - 1} < r_t \leq \phi_*^{2^t - 1}, \quad t = 1, 2, \ldots, t'.
\]
Since $t \leq t'$ can only occur when $r_t > 1/2$, the number of steps where 
(\ref{e:torusradii-alt}) applies is bounded by a constant determined by $K, \beta$, and $d$:
\begin{equation}\label{e:boundt'}
    t' < \log_2{ \left(1 + 1/\log_2(1/\phi_*) \right) }
= \log_2{ \left( \frac{\log_2(K/\beta)}{\log_2(K/\beta) - d}
\right) }.
\end{equation}
The validity of the bound $\theta_{t} \leq \alpha$ for $t = 0 , 1, 2, \ldots, \tau$,
is a consequence of the definition (\ref{e:runtime}).
Thus Proposition \ref{p:validupdate} applies, and the 2NRQ algorithm runs correctly
for $\tau$ steps, at the end of which $\theta_{\tau} \leq \alpha$.
If $\theta_{\tau} \leq \alpha \gamma^d$, then another round of 2NRQ would be possible, as
explained in (\ref{e:successbounds}). Thus $\theta_{\tau} > \alpha \gamma^d$ provided
$\tau > t'$, which occurs if $r_{\tau} < 1/2$, which in turn follows from 
$n > K/(\alpha 2^d)$ by (\ref{e:runtime}).
\end{proof}

\subsubsection{Work estimates}

\begin{cor}[2NRQ work estimate for Poisson process on torus in $d$ dimensions]
\label{c:2nfwork}
The mean number of distance evaluations to achieve a success rate at least $\alpha \gamma^d$ in the second neighbor range query,
with a neighbor average of $K > 2^d$, is bounded above
by a quantity proportional to 
\[
n K^2 \left( t' + \frac{ \log{(n \alpha / K)}} {d \log{(1/\gamma_*)}} \right)
\]
where $t'$ does not depend on $n$,
provided $- \alpha^{-1} \log{(1 - \alpha)}=\beta \in (1, K/2^d)$.
In particular, $O(\log{n})$ rounds of 2NRQ suffice.
\end{cor}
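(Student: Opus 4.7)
The plan is to multiply the mean number of distance evaluations in a single round of 2NRQ by an upper bound on the total number of rounds $\tau$.

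First I would bound the work in round $t$. Each distance evaluation arises from a ``proposed edge'' in the sense of Section \ref{s:graphupdate}: for each vertex $v'' \in V$ and each unordered pair $\{v,v'\}$ of distinct neighbors of $v''$ under $E_{t-1}$, the algorithm must evaluate $\rho(v,v')$ before applying acceptance sampling. So the work in round $t$ is $\sum_{v'' \in V} \binom{\deg_{E_{t-1}}(v'')}{2}$. By Theorem \ref{t:torus}(c) (which applies inductively through the algorithm, not merely at the terminal step), and by the defining identity $\theta_{t-1} \cdot n h(r_{t-1}) = K$ from (\ref{e:samplerate}), the degree $\deg_{E_{t-1}}(v'')$ is a Poisson random variable with mean $K$. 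Using $\mathbf{E}[\binom{X}{2}] = \lambda^2/2$ for $X \sim$ Poisson$(\lambda)$, together with $\mathbf{E}[|V|]=n$ and Campbell's formula for Poisson processes, the expected work in a single round is at most a constant times $nK^2$.

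Next I would bound $\tau$. By Theorem \ref{t:torus}(b), for every $t > t'$ we have $r_t \leq \gamma_* r_{t-1}$, and since $r_{t'} \leq 1$ this gives $r_{t'+k}^d \leq \gamma_*^{dk}$. The stopping rule (\ref{e:runtime}) terminates the algorithm once $r_\tau^d < K/(n\alpha)$, so the number of post-$t'$ rounds satisfies $\gamma_*^{d(\tau-t')} \geq K/(n\alpha)$ fails at step $\tau+1$, i.e.\
\[
\tau - t' \;\leq\; \frac{\log(n\alpha/K)}{d\log(1/\gamma_*)} + O(1).
\]
Hence $\tau \leq t' + \log(n\alpha/K)/(d\log(1/\gamma_*)) + O(1)$, with $t'$ bounded by (\ref{e:boundt'}) independently of $n$.

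Finally I would combine the two bounds: multiplying an $O(nK^2)$ per-round expected work by the displayed bound on $\tau$ yields the advertised estimate, and in particular $O(\log n)$ rounds suffice as $n \to \infty$ with $K,d$ fixed. The main subtlety I expect is the per-round work bound: one must be careful that the distributional statement of Theorem \ref{t:torus}(c) (uniformity of neighbors inside $B_{r_{t-1}}(v)$) propagates through the inductive construction so that $\deg_{E_{t-1}}(v)$ really is marginally Poisson$(K)$, and that the second-moment sum $\sum_{v''}\binom{\deg_{E_{t-1}}(v'')}{2}$ has expectation cleanly controlled by the Poisson process's Campbell/Mecke calculus rather than requiring independence between distinct vertices' degrees.
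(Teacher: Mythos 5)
Your proof is correct and follows essentially the same route as the paper's: bound the per-round work by $O(nK^2)$ via the fact that each vertex has mean degree $K$ so checking $O(\binom{K}{2})$ neighbor pairs per vertex suffices, and bound the number of rounds by combining the geometric decay $r_t \leq \gamma_* r_{t-1}$ from Theorem~\ref{t:torus}(b) with the stopping rule~(\ref{e:runtime}). Your version is slightly more careful about the per-round work (making the Poisson$(K)$ degree distribution explicit) than the paper's terse ``at each step, $O(\binom{K}{2})$ neighbor pairs must be checked, for each of $n$ vertices,'' but the argument is the same.
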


\textbf{Remark: }
It is not easy to describe how the complexity of 2NRQ varies with dimension,
because the quality of the result of the algorithm increases with $\beta$,
i.e. with $K/2^d$.

\begin{proof}
By Theorem \ref{t:torus}, part (b),
\(
r_t < \gamma_*^{t-t'} r_{t'}
\)
for $t = t'+1, 1, \ldots, \tau$. It follows from the definition (\ref{e:runtime}) of $\tau$
that
\(
\gamma_*^{d(\tau-t')}  > \gamma_*^{d(\tau-t')} r_{t'}^d > r_{\tau}^d \geq K/(n \alpha)
\), and thus
\[
\tau  < t' + \frac{ \log{(n \alpha / K)}} {d \log{(1/\gamma_*)}}.
\]
At each step, $O(\binom{K}{2})$ neighbor pairs must be checked,
for each of $n$ vertices. Hence the upper bound on the work.
\end{proof}

\section{Conclusions and Future Work}

We have seen that NND fails to achieve sub-quadratic complexity
for generic concordant ranking systems. It would be interesting to 
know whether a simiar failure occurs for
rank cover trees \cite{hou} and comparison trees \cite{hag}.

The $\lceil\log_{K-1}{n}\rceil$ diameter bound for the expander graph
used to initialize NND suggests that $O(\log{n})$ rounds of 
friend set updates may share enough information for the FOF
principle to work.
Experiments, reported in a sequel \cite{darnnd}, give a class of 
examples where $2 \lceil \log_K{n}\rceil$ rounds of friend set updates 
sufficed for successful 
convergence of NND provided $K$ exceeded the dimension $d$
in which points were embedded.
The second neighbor range query, which exploits FOF
but is not based on rankings, provably finishes in $O(\log{n})$ rounds when applied
to a class of homogeneous Poisson processes on compact separable
metric spaces, but only when $K > 2^d$ (at least for the $\ell_{\infty}$ norm on the torus).

Combinatorial disorder, introduced by Goyal et al.\ \cite{goy},
defines approximate triangle inequalities on ranks.  
These authors define a disorder constant $D$ such that,
in the notation of Definition \ref{d:ranksystem},
\[
r_y(x) \leq D (r_z(x) + r_z(y)), \quad \forall x, y, z.
\]
Possibly some condition on $D$, or a similar notion, would guarantee
 that NND finishes in $O(n \log{n})$ work
with a constant depending on $D$.

Can one say more about the structure of the
 equivalent metrics graph of Section \ref{sec:varphi}? 
 Further insight into its structure may shed light on how frequently NND can be expected to work well in practice. It is possible that our model of a generic CRS is biased in favor of those for which NND has $O(n^2)$ complexity.

\textbf{Acknowledgments: } The authors thank Leland McInnes (TIMC)
for drawing our attention to nearest neighbor descent, and
 Kenneth Berenhaut and Katherine More (WFU) for valuable
 discussions about data science based on rankings.

 \appendix
 \section{Bounding Diameter of the Initial Random Graph}
 \subsection{Main result}
 To motivate Proposition \ref{p:diameter}, observe
 that the arcs of $D$ refer to the initial friend selections
 in NND, while edges of $G$ refer to the union of friend and
 cofriend relations. The diameter of $G$ limits
 the rate of spread of information in NND.
 \begin{prop}
 \label{p:diameter}
Suppose $\Gamma_x$ is a uniform random $K$-subset
of $S \setminus \{x\}$ for each $x \in S$, with
$\{\Gamma_x, x \in S\}$ independent. For $|S| = n$,
consider the regular directed graph $D=D^K_n$ on $S$ with arcs $\bigcup_x \{(x, y), y \in \Gamma_x \}$, and the corresponding undirected graph $G=G^K_n$ on $S$
with edges $\bigcup_x \{\{x, y\}, y \in \Gamma_x \}$.
Let $B=B_n^K$ denote the diameter of $G$. 
For all $K \geq 3$ and $\epsilon>0$,
\[
\lim_{n \to \infty} \Pr \left[ 
B_n^K \leq (1 + \epsilon) \log_{K-1}(n) \right] = 1.
\]
 \end{prop}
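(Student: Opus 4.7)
The plan is to prove the bound via a standard two-sided BFS plus sprinkling argument, combined with a union bound over all vertex pairs. Throughout, I treat out-arcs of $D$ as undirected edges (discarding cofriend edges is only wasteful), so I only need to use the independently chosen sets $\{\Gamma_x\}_{x \in S}$. Since each $\Gamma_x$ is a uniform $K$-subset of $S \setminus \{x\}$, conditional on any history of the exploration, the number of fresh neighbors of a newly reached vertex is hypergeometrically distributed with parameters that are well approximated by $\mathrm{Bin}(K-1, 1 - |S_t|/n)$ as long as the explored region $S_t$ is small.

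First I would fix two distinct vertices $u,v$ and grow BFS trees $T_u, T_v$ from each, using only the out-arcs of $D$. Let $L_t(u)$ denote the set of vertices at depth exactly $t$ in $T_u$ and $S_t(u) := \bigcup_{s \le t} L_s(u)$. Set $t^\star := \lceil \tfrac{1 + \epsilon/3}{2} \log_{K-1}(n)\rceil$. The core lemma is: conditional on the exploration up to depth $t$, as long as $|S_t(u)| \le n^{1/2 + \epsilon/6}$, each vertex in $L_t(u)$ contributes at least $K-1$ fresh out-neighbors with conditional probability $1 - o(1/n)$, by Chernoff-type concentration for the hypergeometric. Iterating across depths and applying a union bound yields $|L_{t^\star}(u)| \ge (K-1)^{t^\star} \ge n^{1/2 + \epsilon/3}$ with probability at least $1 - e^{-n^{c}}$ for some $c = c(K,\epsilon) > 0$; the same holds for $T_v$.

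Next, I would show that $T_u$ and $T_v$ meet. The cleanest route is sprinkling: after exposing all of $T_u$ and $T_v$ down to depth $t^\star$, reveal one additional BFS layer from $u$. Each vertex $x \in L_{t^\star}(u)$ still has at least $K-1$ unexposed out-arcs, uniformly distributed on $S \setminus (\text{exposed nbrs of } x)$, which (since $|S_{t^\star}(u) \cup S_{t^\star}(v)| = o(n)$) hits $S_{t^\star}(v)$ independently for each arc with probability at least $(1-o(1))|S_{t^\star}(v)|/n$. Since $|L_{t^\star}(u)| \cdot (K-1) \cdot |S_{t^\star}(v)|/n \gtrsim n^{\epsilon/3}$, a second-moment or direct Chernoff calculation yields that the extra layer meets $S_{t^\star}(v)$ with probability $1 - e^{-n^{\epsilon/3}}$. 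On that event,
\[
\mathrm{dist}_G(u,v) \le 2 t^\star + 1 \le (1 + \epsilon)\log_{K-1}(n)
\]
for $n$ large enough.

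Finally, I would take a union bound over the $\binom{n}{2}$ pairs $(u,v)$. Since each pair fails with super-polynomially small probability, the union bound still yields $\Pr[B_n^K \le (1+\epsilon)\log_{K-1}(n)] \to 1$. The main obstacle is the sprinkling step: we must make precise that enough ``independent randomness'' remains after the two BFS explorations to guarantee an $L_{t^\star}(u)$–$S_{t^\star}(v)$ edge whp. This is handled by carefully choosing the filtration so that the out-arcs exposed during the growth of $T_u$ and $T_v$ never include the arcs used in the sprinkling step (one may, for instance, pre-partition each $\Gamma_x$ randomly into ``growth'' and ``sprinkling'' halves, which only changes the effective branching factor by a constant and does not affect the asymptotic exponent). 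The remaining routine calculations—the hypergeometric concentration and the explicit verification that $|S_{t^\star}| \le n^{1/2+\epsilon/6}$ throughout the growth phase—are standard and I would relegate them to brief appendix lemmas.
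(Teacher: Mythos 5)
Your overall strategy---grow BFS trees from $u$ and $v$ to depth roughly $\tfrac{1}{2}\log_{K-1} n$, then argue by sprinkling that the two leaf sets are joined by an edge---is exactly the skeleton of the paper's proof (Propositions \ref{prop:expander1} and \ref{prop:expander2} play the two roles). However, there is a genuine gap in the concentration claims that drive your union bound.

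The specific problem is the per-vertex estimate ``each vertex in $L_t(u)$ contributes at least $K-1$ fresh out-neighbors with conditional probability $1-o(1/n)$,'' and the ensuing claim that $|L_{t^\star}(u)|\geq (K-1)^{t^\star}$ holds with probability $1 - e^{-n^c}$. Neither is true. A vertex $x\in L_t(u)$ has at most $K-2$ fresh out-neighbors precisely when at least two of its $K$ arcs land in $S_t(u)$, and that event has conditional probability of order $\binom{K}{2}\left(\frac{|S_t(u)|}{n}\right)^2 \approx n^{-1+\epsilon/3}$ near the end of the growth phase, which is \emph{larger} than $1/n$, not $o(1/n)$. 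Worse, iterating the claim cannot give super-polynomial concentration: for constant depths $t$ the exploration involves only $O(K^t)$ arcs, so the best available tail bound on the growth at those depths is exponential in $K^t$, not in any power of $n$. Summing over depths, the probability that the BFS tree from a fixed $u$ fails to grow at rate $\geq K-1$ at every layer is of order $n^{-1/2+\Theta(\epsilon)}$, i.e.\ only polynomially small. A union bound over the $\binom{n}{2}$ pairs (or even over the $n$ choices of $u$) therefore diverges, and the argument as written does not close. (A secondary slip: with $t^\star = \lceil\frac{1+\epsilon/3}{2}\log_{K-1}n\rceil$ you get $(K-1)^{t^\star}\approx n^{1/2+\epsilon/6}$, not $n^{1/2+\epsilon/3}$; this one is harmless for the sprinkling step, which only needs $|L_{t^\star}|\gtrsim\sqrt{n\log n}$.)

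The standard repair---and the route the paper takes---is to decouple the growth from the choice of pair. Rather than certifying BFS growth pair-by-pair, prove a \emph{uniform} vertex-expansion statement: with probability $1-o(1)$, every set $X$ with $|X|<\alpha n/\log n$ satisfies $|N(X)|>(K-1-\varepsilon)|X|$ (this is the paper's Proposition \ref{prop:expander1}). The union bound here is over all small sets of size $l$, and the per-set failure probability $\left(c\,(l/n)^\varepsilon\right)^l$ shrinks exponentially in $l$, exactly fast enough to beat $\binom{n}{l}\leq (ne/l)^l$. Once this event holds, the growth $|N^r(x)|>(K-1-\varepsilon)^r$ is \emph{deterministic} for every $x$, so no further union bound over $u$ is needed for the growth phase; only the sprinkling step is then union-bounded over pairs, with failure probability $n^{-2K}$ per pair. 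Your sprinkling step and final distance accounting are essentially correct; it is the growth phase that needs to be rerouted through the set-wise expansion lemma.
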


\textbf{Remark: } 
Bollob{\'a}s \& de la Vega \cite{bol}
prove that, for any $\epsilon > 0$, the diameter of a uniform $K$-regular random graph, with $K \geq 3$, almost surely
does not exceed the smallest integer $r$ such that
\[
(K-1)^r \geq (2 + \epsilon) K n \log{n}.
\]
Fernholz \& Ramachandran \cite[Theorem 5.1]{fer}
estimate the diameter of graphs
generated uniformly at random according to
a given vertex degree sequence. 
When vertex degrees $X_i$ are sampled independently
with $X_i - K\sim$ Binomial$(n-1, \frac{K}{n-1})$,
\cite[Theorem 5.1]{fer}
implies a diameter estimate of $\log{(n)}/\log{(2 K)} + o(\log{n})$.
This does not apply directly to our
model, in which vertex degrees are negatively dependent,
because they sum to exactly $2 K n$.
Krivelevich \cite[Lemma 8.2]{kri} shows that
expander graphs in general have $O(\log{n})$
diameter.
We give below a proof from first principles.

\subsection{Supporting estimates}
Proposition \ref{p:diameter} is an immediate consequence of two estimates concerning $D$. 
For $X \subseteq S$, let $N(X) := \{y \in S \setminus X \; : \; \exists x\in X \text{ with } y \in \Gamma_x\}$.

\begin{prop}[Vertex Expanders]
\label{prop:expander1}
With $D$ as in Proposition \ref{p:diameter},
for all $\varepsilon >0$, there exists $\alpha > 0$ (depending also on $K$) such that the following holds with probability tending to 1 as $n \rightarrow \infty$. For all nonempty  $X \subseteq S \text{ with } |X| < \alpha n/\log n$,
\begin{align}
    \label{eq:expander}
    |N(X)| > (K-1-\varepsilon)|X|.
\end{align}

\end{prop}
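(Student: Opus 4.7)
The plan is a standard first-moment (union bound) argument over bad configurations. For $k \geq 2$, set $m := \lfloor (K-1-\varepsilon)k\rfloor$. The event that some $X$ of size $k$ violates \eqref{eq:expander} is contained in the event that there exist disjoint $X, Y \subseteq S$ with $|X|=k$, $|Y|=m$, and $\Gamma_x \subseteq X \cup Y$ for every $x \in X$ (that is, $N(X) \subseteq Y$). The case $k = 1$ is automatic because $|N(\{x\})| = |\Gamma_x| = K > K-1-\varepsilon$, so it suffices to bound and sum over $k \geq 2$.

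For a fixed disjoint pair $(X,Y)$ of sizes $(k,m)$, independence of the $\Gamma_x$ and the fact that $|(X \cup Y) \setminus \{x\}| = k+m-1$ for $x \in X$ yield
\[
\Pr[N(X) \subseteq Y] \;=\; \prod_{x\in X}\frac{\binom{k+m-1}{K}}{\binom{n-1}{K}} \;\leq\; \left(\frac{k+m-1}{n-1}\right)^{Kk}.
\]
Multiplying by the number of pairs $\binom{n}{k}\binom{n-k}{m}$ and applying $\binom{a}{b} \leq (ea/b)^b$ with $r := m/k \leq K - 1 - \varepsilon$, the per-$k$ probability is at most
\[
\left[\frac{e^{1+r}(1+r)^K}{r^r}\left(\frac{k}{n}\right)^{K-1-r}\right]^k \;\leq\; \bigl[C_{K,\varepsilon}\,(k/n)^{\varepsilon}\bigr]^k,
\]
using $(k/n)^{K-1-r} \leq (k/n)^\varepsilon$ (since $k \leq n$ and $K-1-r \geq \varepsilon$) and absorbing the $r$-dependent prefactor into a constant $C_{K,\varepsilon}$ by bounding $r \in (0, K-1]$ uniformly. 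The positivity of the exponent $K-1-r$, guaranteed by $\varepsilon > 0$, is the linchpin of the whole argument.

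The final step is to sum over $k$ in $\{2, 3, \ldots, \lfloor \alpha n/\log n \rfloor\}$. In this range, $(k/n)^\varepsilon \leq (\alpha/\log n)^\varepsilon =: q_n$, which tends to $0$ as $n \to \infty$, so for $n$ large enough $C_{K,\varepsilon} q_n < 1/2$ and each term is bounded by $2^{-k}$. The geometric series then sums to $O(q_n^2) = o(1)$, completing the union bound. The main technical point is really the choice of threshold $\alpha n/\log n$: it is precisely calibrated so that the factor $(k/n)^\varepsilon$ beats the combinatorial entropy $\binom{n}{k}\binom{n-k}{m}$ uniformly in $k$, and any fixed $\alpha>0$ (e.g.\ $\alpha = 1$) works.
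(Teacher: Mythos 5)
Your proof is correct, and it follows a genuinely different route than the paper. Both arguments are first-moment union bounds over sets $X$ of size $k$, but they bound $\Pr\bigl[|N(X)| \leq m\bigr]$ for fixed $X$ in different ways. The paper (following Vadhan) reveals the $Kl$ entries of $\Gamma_{x_1},\ldots,\Gamma_{x_l}$ sequentially, classifies each as ``bad'' (a collision with a previous choice or a landing in $X$) or ``good,'' notes that failure of \eqref{eq:expander} forces at least $(1+\varepsilon)l$ bad choices, and bounds this by $\binom{Kl}{(1+\varepsilon)l}\bigl(l(K+1)/n\bigr)^{(1+\varepsilon)l}$ via conditional probabilities. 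You instead fix a candidate enclosing set $Y$ of size $m=\lfloor(K-1-\varepsilon)k\rfloor$, compute $\Pr[N(X)\subseteq Y]$ exactly as a product using the independence of the $\Gamma_x$, and then union over $Y$, incurring the entropy cost $\binom{n-k}{m}$. Your version trades a larger entropy factor (roughly $(n/m)^m$ versus $2^{Kl}$) for a sharper per-pair probability and avoids the ``conditioned on all prior choices'' bookkeeping, instead leaning on the clean product structure from independence; both collapse to the same shape $\bigl(C_{K,\varepsilon}(k/n)^\varepsilon\bigr)^k$, and either is fine. You are also right that the precise choice of $\alpha$ is immaterial: the paper's explicit $\alpha = [e^{2+\varepsilon}(K+1)^{2+2\varepsilon}]^{-1/\varepsilon}$ just normalizes the constant, and any fixed $\alpha > 0$ works once $n$ is large. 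One small point worth flagging if you write this up: the step $(k/n)^{K-1-r}\le(k/n)^\varepsilon$ needs $k<n$, which holds once $\log n>\alpha$, and the factor $e^{1+r}(1+r)^K r^{-r}$ is bounded on $r\in(0,K-1]$ because $r^{-r}\to 1$ as $r\to 0^+$ and the rest is continuous on the closed interval; both are easy but should be said.
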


\begin{proof}[Proof of Proposition \ref{prop:expander1}]
We follow Vadhan \cite[Theorem 4.4]{vad} with minor changes. For $l < \alpha n /\log n$ (with $\alpha$ to be defined below), let $p_l$ be the probability that there is some $X \subseteq S$ of size exactly $l$ that violates \eqref{eq:expander}. We will show that $\sum_{l=1}^{\alpha n/\log n} p_l = o(1)$.

Fix $l$ and $X = \{x_1,x_2,\ldots,x_l\} \subseteq S$, and imagine choosing one by one the elements 
of $\Gamma_{x_1}$, followed by the elements 
of $\Gamma_{x_2}$, $\ldots$ followed by the elements
of $\Gamma_{x_l}$. Call a choice \emph{bad} if the vertex chosen is an element of $X$ or is the same as some previously-chosen vertex. The probability that any particular choice is bad is at most $l(K+1)/n$, even conditioned on all prior choices. Failure of \eqref{eq:expander} for $X$ requires at least $(1+\varepsilon)l$ bad choices, and thus has probability at most \[
\binom{Kl}{(1+\varepsilon)l} \left(\frac{l(K+1)}{n}\right)^{(1+\varepsilon)l}
\]
(the binomial coefficient choosing which choices are bad). Therefore, summing over possible sets $X$ (of size $l$), Stirling's
approximation gives
\begin{align*}
p_l \leq \binom{n}{l} \binom{Kl}{(1+\varepsilon)l} \left(\frac{l(K+1)}{n}\right)^{(1+\varepsilon)l}
&\leq \left(\frac{ne}{l}\right)^l \left(\frac{Kle}{(1+\varepsilon)l}\right)^{(1+\varepsilon)l} \left(\frac{l(K+1)}{n}\right)^{(1+\varepsilon)l} \\
&< \left(\frac{e^{2+\varepsilon} (K+1)^{2+2\varepsilon}}{(1+\varepsilon)^{1+\varepsilon}} \left(\frac{l}{n}\right)^{\varepsilon}\right)^l.
\end{align*}

Now, fix $\alpha = [e^{2+\varepsilon}(K+1)^{2+2\varepsilon}]^{-1/\varepsilon}$ and (for convenience) $\gamma=(1+\varepsilon)^{-(1+\varepsilon)} < 1$. Then for all $l < \alpha n/\log n$, we have $p_l < \gamma^{l}/(\log n)^{l\varepsilon} \leq \gamma^{l}/(\log n)^{\varepsilon}$, so 

\[ \sum_{l=1}^{\alpha n/\log n} p_l \;<\; 
(\log n)^{-\varepsilon}\sum_{l=1}^\infty \gamma^l
\;=\; o(1). 
\qedhere \] 
\end{proof}

Let $d_D(x,y)$ be the distance from $x$ to $y$ in $D$, and let $N^r(x):=\{y \in S \;:\; d_D(x,y)=r\}$ and $\overline{N}^r(x):=\cup_{i=0}^r N^i(x)$.

\begin{prop}[Diameter Bound]
\label{prop:expander2}
With $D$ as in Lemma \ref{p:diameter} and any $\varepsilon>0$, 
the following holds with probability tending to 1 as $n \rightarrow \infty$. For all $x,y \in S$, letting $r = \lceil\log_{(K-1-\varepsilon)}(2n\log n)^{1/2}\rceil$,
either
\begin{equation}\label{eq:expander2}
 \overline{N}^r(x) \cap \overline{N}^r(y) \neq \varnothing   
\end{equation}
or
\(
\exists z \in N^r(x), w\in N^r(y)\)
such that $w\in \Gamma_z$; in other words,
the diameter of $G$ does not exceed $2 r + 1$.

\end{prop}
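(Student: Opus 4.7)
The plan is to combine the vertex expansion property from Proposition \ref{prop:expander1} with a deferred-randomness (BFS exposure) argument on the last layer. Fix $\varepsilon > 0$ small enough that $K-1-\varepsilon > 1$, and work on the (high-probability) event $E$ of Proposition \ref{prop:expander1}, so that every set $X \subseteq S$ with $|X| < \alpha n/\log n$ satisfies $|N(X)| > (K-1-\varepsilon)|X|$.

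First I would establish, by an easy induction on $i$, that for every vertex $x$ and every $i \leq r$ one has $|T_i^x| := |\overline{N}^i(x)| \geq (K-1-\varepsilon)^i$ and moreover $|N^i(x)| = |N(T_{i-1}^x)| > (K-1-\varepsilon)\,|T_{i-1}^x| \geq (K-1-\varepsilon)^i$, where the hypothesis of Proposition \ref{prop:expander1} applies since $(K-1-\varepsilon)^r \leq \sqrt{2n\log n} \cdot (K-1-\varepsilon)$ is well below $\alpha n/\log n$ for large $n$. In particular, on $E$,
\[
|N^r(x)| \geq (K-1-\varepsilon)^r \geq \sqrt{2n\log n} \quad\text{for every } x \in S.
\]

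Now fix a pair $x,y \in S$ and expose, by BFS, the sets $\Gamma_w$ for every $w \in T_{r-1}^x \cup T_{r-1}^y$. Let $\mathcal{F}$ denote the $\sigma$-algebra generated by the outcomes of this exposure. The sets $T_r^x$, $T_r^y$, $N^r(x)$, $N^r(y)$, and the event $E$ itself are $\mathcal{F}$-measurable (the latter because $E$ depends only on small sets $X$, each contained in some $T_{r-1}$ after the full BFS has been run — or, alternatively, one can run the argument inside a separate BFS of length $r$ from every vertex, revealing $\Gamma_w$ for every $w$ whose BFS-distance from some root is $\leq r-1$). Suppose we are in Case B, i.e.\ $T_r^x \cap T_r^y = \varnothing$. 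Then $N^r(x)$ and $N^r(y)$ are disjoint from $T_{r-1}^x \cup T_{r-1}^y$, so for $z \in N^r(x)$, the variable $\Gamma_z$ has \emph{not} been revealed and, conditionally on $\mathcal{F}$, is a uniform random $K$-subset of $S \setminus \{z\}$, independently across $z$.

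Consequently, writing $a = |N^r(x)|$ and $b = |N^r(y)|$ (both $\mathcal{F}$-measurable),
\[
\Pr\bigl[\,\nexists\, z \in N^r(x),\, w \in N^r(y) : w \in \Gamma_z \,\bigm|\, \mathcal{F}\bigr]
\;\leq\; \Bigl(1 - \tfrac{b}{n-1}\Bigr)^{K a}
\;\leq\; \exp\!\Bigl(-\tfrac{K a b}{n-1}\Bigr).
\]
On $E \cap \{T_r^x \cap T_r^y = \varnothing\}$, $ab \geq 2n\log n$, so this conditional probability is bounded by $\exp(-2K\log n \cdot (1+o(1))) = n^{-2K+o(1)}$. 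Taking expectations and then a union bound over the $\binom{n}{2}$ pairs $(x,y)$ yields
\[
\Pr[\text{some pair fails}] \;\leq\; \Pr[E^c] + \binom{n}{2} n^{-2K+o(1)} \;=\; o(1),
\]
for $K \geq 2$, which gives Proposition \ref{prop:expander2} and hence the diameter bound $B_n^K \leq 2r+1 \leq (1+\varepsilon)\log_{K-1}(n)$ asserted in Proposition \ref{p:diameter}.

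The main obstacle is making the deferred-randomness step rigorous: one must specify an exposure order (a stopping time on the BFS) so that ``the $\Gamma_z$ for $z$ on the frontier are unrevealed'' is literally true, and must justify that the expander event $E$ (which is a statement about \emph{all} $\Gamma_w$) does not destroy the conditional uniformity/independence of the remaining $\Gamma_z$. The cleanest way is to avoid conditioning on $E$ inside the bound, instead decomposing $\Pr[\text{failure}] \leq \Pr[E^c] + \sum_{x,y} \Pr[\text{pair }(x,y)\text{ bad},\; |N^r(x)|,|N^r(y)| \geq \sqrt{2n\log n}]$, and applying the conditional independence argument to the second summand using only $\mathcal{F}$-measurability, never conditioning on $E$ itself.
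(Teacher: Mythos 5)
Your proof is essentially the paper's argument: use the expansion estimate of Proposition \ref{prop:expander1} to get $|N^r(x)| > \sqrt{2n\log n}$, bound the probability that no $\Gamma_z$ with $z \in N^r(x)$ hits $N^r(y)$ by the hypergeometric estimate $\Pr(\Gamma_z \cap Y = \varnothing) \leq (1-|Y|/(n-1))^K \leq e^{-K|Y|/(n-1)}$, and union-bound over the $\binom{n}{2}$ pairs. You are, if anything, more careful than the paper about the deferred-randomness step: the paper simply writes ``condition on the event that $\overline{N}^r(x)\cap \overline{N}^r(y)$ is empty'' and applies the product bound, whereas you make the $\sigma$-algebra $\mathcal{F}$ of revealed BFS edges explicit and correctly flag that one should decompose around $E^c$ rather than condition on $E$; that is a genuine (if minor) improvement in rigor.

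One imprecision worth fixing: to push the induction $|N^i(x)| > (K-1-\varepsilon)|\overline{N}^{i-1}(x)|$ through all levels $i \leq r$, you need $|\overline{N}^{i-1}(x)| < \alpha n/\log n$ at each step, and the a priori \emph{upper} bound on that set is $\sum_{j<i}K^j = O(K^{i-1})$, not $(K-1-\varepsilon)^{i-1}$. So the relevant check is that $K^{r} = o(n/\log n)$, not that $(K-1-\varepsilon)^{r} \ll n/\log n$ as you wrote. This does hold, since $(K-1-\varepsilon)^r \approx \sqrt{2n\log n}$ gives $K^r \approx (2n\log n)^{\log K/(2\log(K-1-\varepsilon))}$, and the exponent is strictly less than one whenever $K < (K-1-\varepsilon)^2$, which is true for $K \geq 3$ and $\varepsilon$ small — but it is this inequality, not the one you stated, that keeps the BFS ball inside the expansion regime. (The paper's own proof also skips this verification.)
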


\begin{proof}[Proof of Proposition \ref{prop:expander2}]
Fixing any $Y\subseteq S$ and $z \in S\setminus Y$, 
\begin{align*}
\Pr(\Gamma_z \cap Y = \varnothing) &= \left(\frac{n-|Y|}{n-1}\right)
\cdots
\left(\frac{n-|Y|-K+1}{n-K}\right)
< \left(\frac{n-|Y|+1}{n}\right)^K \\
&\qquad = \left(1- \frac{|Y|-1}{n}\right)^K
< \exp[-K(|Y|-1)/n].
\end{align*}

For 
any $x,y$, Proposition \ref{prop:expander1} implies $|N^r(x)| > \sqrt{2n\log n}$, and likewise for $y$. Fix $x,y$,
and condition on the event that
$\overline{N}^r(x) \cap \overline{N}^r(y)$ is empty; the conditional
probability that there exists no $z \in N^r(x), w\in N^r(y)$
for which $w\in \Gamma_z$ is at most $\exp[-K|N^r(x)||N^r(y)|/n] < \exp[-2K\log n] = n^{-2K}$. This bound is the same for all $x, y$.
Summing over the $n^2$ choices of $x,y$ yields a probability at most $n^{2-2K} = o(1)$ of the existence of a particular $x,y$ for which the
shortest connecting path exceeds length 
$2 r + 1$.
\end{proof}

\subsection{Proof of Proposition \ref{p:diameter}}
To complete the proof of Proposition \ref{p:diameter}, the distance in $G$ between any $x,y \in S$ satisfying 
one of the alternatives in Proposition \ref{prop:expander2}
is at most $2r+1$, with $r$ as in Proposition \ref{prop:expander2}. Now 
\[ 2r + 1 < (1+o(1))\frac{\log n}{\log(K-1)+\log(1-\varepsilon)}, \]
which we want to show to be at most 
\[ (1+\epsilon)\frac{\log n}{\log(K-1)}, \]
for some fixed $\epsilon$. Since Proposition \ref{prop:expander2} holds for any $\varepsilon$, we can easily choose $\varepsilon$ sufficiently small.

\end{document}